\newtheorem{theorem}{Theorem}
\newtheorem{proposition}[theorem]{Proposition}
\numberwithin{equation}{section}
\begin{document}

\title{A T(1) theorem for entangled multilinear dyadic Calder\'{o}n-Zygmund operators}

\author{Vjekoslav Kova\v{c}}
\address{Vjekoslav Kova\v{c}, Department of Mathematics, University of Zagreb, Bijeni\v{c}ka cesta 30, 10000 Zagreb, Croatia}
\email{vjekovac@math.hr}
\thanks{The first author was partially supported by the MZOS grant 037-0372790-2799 of the Republic of Croatia}

\author{Christoph Thiele}
\address{Christoph Thiele, University of California at Los Angeles and
Universit\"at Bonn, Endenicher Allee 60, 53115 Bonn, Germany}
\email{thiele@math.uni-bonn.de}
\thanks{The second author was partially supported by NSF grant DMS 1001535}

\subjclass[2010]{42B20}
\keywords{Calder\'{o}n-Zygmund operator, multilinear operator, T(1) theorem}

\begin{abstract}
We prove a boundedness criterion for a class of dyadic multilinear forms acting on two-dimensional functions.
Their structure is more general than the one of classical multilinear Calder\'{o}n-Zygmund operators
as several functions can now depend on the same one-dimensional variable.
The study of this class is motivated by examples related to the two-dimensional bilinear Hilbert transform and to bilinear ergodic averages.
This paper is a sequel to a prior paper by the first author.
\end{abstract}

\maketitle

\tableofcontents

\section{Introduction}

The recent papers \cite{DT},\,\cite{Ber},\,\cite{Kov2},\,\cite{Kov1} initiated the study of multilinear singular integral operators and forms
with certain general modulation symmetries which force the Schwartz kernels
of the operators and forms to be supported on lower dimensional subspaces.
A multilinear variant of the Littlewood-Paley theory combined with the method of Bellman functions was introduced in \cite{Kov1} and \cite{Kov2} and applied
to proving $\mathrm{L}^p$ estimates for such multilinear forms acting on
functions of two variables.
The mentioned modulation symmetries appear when we can write the
multilinear form schematically as
\begin{equation}\label{eqentangledscheme}
\Lambda(F_1,F_2,\ldots) = \int_{\mathbb{R}^n} F_1({x_1},x_2) F_2({ x_1},x_3) \ldots \,K(x_1,\ldots,x_n)\,dx_1 dx_2 dx_3 \ldots dx_n ,
\end{equation}
with the exposed feature that $F_1$ and $F_2$ share the variable $x_1$.
If for some $g\in\mathrm{L}^{\infty}(\mathbb{R})$ we have
$$ \widetilde{F}_1(x,y) := g(x) F_1(x,y), \quad \widetilde{F}_2(x,y) := g(x) F_2(x,y), $$
then we have the modulation symmetry
$$ \Lambda(\widetilde{F}_1,F_2,\ldots) = \Lambda(F_1,\widetilde{F}_2,\ldots) . $$
Note that if two functions share both of their variables, which is tantamount
to a symmetry under arbitrary modulations by two dimensional functions,
then the two functions sharing both variables may be replaced by their pointwise product in the multilinear form.
The multilinear form then trivially reduces to one
of a lower degree. It is precisely the partial one-dimensional
modulation symmetries that cause multilinear forms (\ref{eqentangledscheme})
to require techniques outside the scope of the classical
Calder\'{o}n-Zygmund theory.

Our main result, Theorem \ref{theoremmain}, establishes
a T(1)-type criterion for boundedness of a class of multilinear forms of the above type.
In this theorem we specialize to a bipartite structure elaborated in the next section.
This bipartite structure manifests itself in that the variables fall into
two classes, namely the first entry variables $x_i$ and the second entry variables $y_j$.
Sharing of variables can only occur for variables of the same kind, so we rewrite (\ref{eqentangledscheme}) as
\begin{equation}\label{eqentangledscheme2}
\Lambda(F_1,F_2,\ldots) = \int_{\mathbb{R}^{m+n}} F_1(x_1,y_1) F_2(x_1,y_2) \ldots \,K(x_1,\ldots,x_m,y_1,\dots, y_n)\,dx_1 \ldots dx_m dy_1 \dots dy_n.
\end{equation}
If $m$ equals $n$, then the kernel $K$ should be thought of as the usual Coifman-Meyer
or multilinear Calder\'on-Zygmund kernel in $(\mathbb{R}^2)^n$, which is singular along the diagonal $(x_1,y_1)=\dots=(x_n,y_n)$.
The requirement $m=n$ is not necessary in our setting and the kernel is then singular along the set
\begin{equation}\label{diagonalequations}
x_1=\ldots=x_m,\ \ y_1=\ldots =y_n.
\end{equation}
We only discuss the dyadic model of the so-called perfect Calder\'{o}n-Zygmund kernels,
which are described in the next section.
At present we only have a partial understanding of the required
modifications for extending our results to the case of continuous Calder\'{o}n-Zygmund kernels.

The case $m=1$ or $n=1$ trivializes (see Subsection \ref{subseccounterex}), hence the smallest non-trivial and motivational
example of a form of the type (\ref{eqentangledscheme2}) writes as follows:
\begin{equation}\label{eqtwistedparaproduct}
\Lambda_{\sqcup}(F_1,F_2,F_3) := \int_{\mathbb{R}^4} F_1(x_1,y_1) F_2(x_1,y_2) F_3(x_2,y_1)
\,K(x_1,x_2,y_1,y_2) \,dx_1 dx_2 dy_1 dy_2  .
\end{equation}
The translation invariant case
$$K(x_1,x_2,y_1,y_2)=\kappa(x_1-x_2,y_1-y_2) $$
has been studied previously. It is a special case of the two-dimensional bilinear Hilbert transform introduced in \cite{DT}.
Estimates in various ranges of $\mathrm{L}^p$ spaces were proven by Bernicot \cite{Ber} and by the first author \cite{Kov2}.
The present paper particularly generalizes boundedness results for (\ref{eqtwistedparaproduct}) beyond translation invariance,
albeit only in the dyadic model of perfect Calder\'{o}n-Zygmund kernels.

We view this paper as partial progress towards
understanding some very challenging questions about operators with rather general structure (\ref{eqentangledscheme}).
Here we address the bipartite case (\ref{eqentangledscheme2}) only and it is much simpler than the general non-bipartite case.
A profound example of the latter is the \emph{triangular Hilbert transform}
\begin{equation}\label{eqtriangular}
\Lambda_{\triangle}(F_1,F_2,F_3) := \mathrm{p.v.}\int_{\mathbb{R}^3}
F_1(x,y)F_2(y,z)F_3(z,x)\, \frac{1}{x+y+z} \,dx dy dz .
\end{equation}
Compared with (\ref{eqtwistedparaproduct}) we have the additional
shared variable $x_2=y_2$ in the notation of (\ref{eqtwistedparaproduct}),
destroying the bipartite structure.
It appears possible that $\Lambda_{\triangle}$ satisfies some $\mathrm{L}^{p}$
bounds, for example a bound $\mathrm{L}^3\times \mathrm{L}^3\times \mathrm{L}^3\to \mathbb{C}$.
Proving such bounds remains out of reach at present. The form $\Lambda_{\triangle}$ is
closely related to and is stronger than both the one-dimensional bilinear Hilbert
transform and the Carleson operator, which can be realized by choosing the
functions $F_i$ in (\ref{eqtriangular}) appropriately.
The form $\Lambda_{\triangle}$ is also stronger than instances of (\ref{eqtwistedparaproduct}),
as the method of rotation deduces bounds on these instances from the conjectural bounds on (\ref{eqtriangular}).

Part of the motivation for (\ref{eqtriangular}) comes from the desire to
understand pointwise almost everywhere convergence and other questions
for ergodic averages of the form
$$ \frac{1}{n}\sum_{k=0}^{n-1}f(S^k\omega)g(T^k\omega), \ \ \omega\in\Omega, $$
for two commuting measure preserving transformations $S,T\colon\Omega\to\Omega$ on a probability space $\Omega$.
A real-analytic approach suggested by Demeter and the second author in \cite{DT} is to first understand estimates for the bilinear operator
$$ T_{\triangle}(F,G)(x,y) := \mathrm{p.v.}\int_{\mathbb{R}} F(x+t,y) G(x,y+t) \frac{dt}{t} . $$
If we dualize with another function $H$, we obtain
$$ \int_{\mathbb{R}^2}\Big(\mathrm{p.v.}\int_{\mathbb{R}} F(x+t,y) G(x,y+t) \frac{dt}{t}\Big) \,H(x,y) \,dx dy . $$
After the change of variables $z=-x-y-t$ and substitutions
$$ F_1(x,y)=H(x,y), \ \ F_2(y,z)=F(-y-z,y), \ \ F_3(z,x)=G(x,-x-z) , $$
we are left with the negative of the trilinear form (\ref{eqtriangular}).

The connection between $\Lambda_\triangle$ and the paradigmatic objects in
time frequency analysis, namely the bilinear Hilbert transform and Carleson's
operator, suggest to look at time frequency analysis as an approach to $\Lambda_{\triangle}$.
Time frequency analysis generally reduces estimation of more difficult
operators to estimation of simpler operators called tree models.
In the case of $\Lambda_\triangle$ we expect these simpler operators to be
forms of the type (\ref{eqtwistedparaproduct}), albeit with kernels somewhat
more general than those singular along the diagonal (\ref{diagonalequations}).

This paper, as a sequel to \cite{Kov1}, will use the same graph-theoretic setup as there and occasionally refer to results of \cite{Kov1}.
Theorems \ref{theoremmain} and \ref{theoremreformulated} below generalize \cite[Theorem 1.1]{Kov1}, apart from
changes in the exponent range.
We restrict attention to the two-dimensional case, that is to functions $F_i$
of two real variables, since we do not have phenomena to report in higher
dimensions other than straightforward generalizations. We refer to
\cite[Section 8]{Kov2} for a brief outline of some aspects of the higher dimensional theory.

\section{Formulation of the results}
\label{sectionformulation}

We write $A\lesssim B$ for two nonnegative quantities $A$ and $B$ if $A\leq C B$ holds with some constant $0\leq C<\infty$.
We will always specify if $C$ is an absolute constant or if it depends on some parameters.
In order to aid reading the text, functions of one real variable are denoted by lowercase letters,
while functions of several real variables are denoted by uppercase letters.
The characteristic function of a set $S\subseteq\mathbb{R}^n$ will be denoted by $\mathbf{1}_{S}$.

A \emph{dyadic interval} will always mean an interval of the form $[2^{-k}\ell,2^{-k}(\ell+1))$ for $k,\ell\in\mathbb{Z}$.
An \emph{$n$-dimensional dyadic cube} is any set $I_1\times I_2\times \ldots\times I_n$, where
$I_1,I_2,\ldots,I_n$ are dyadic intervals of the same length.
We will write $\mathcal{C}_n$ for the collection of all $n$-dimensional dyadic cubes.
Particularly important are \emph{dyadic squares} in $\mathbb{R}^{2}$, obtained by specializing $n=2$,
and their collection will be denoted simply by $\mathcal{C}$.

Fix two positive integers $m$ and $n$. The \emph{diagonal} in $\mathbb{R}^{m+n}$ is
$$ D := \big\{ (\underbrace{x,\ldots,x}_{m},\underbrace{y,\ldots,y}_{n}) : x,y\in\mathbb{R} \big\} . $$
The definition of \emph{perfect dyadic Calder\'{o}n-Zygmund kernel} will be adapted from \cite[Section 6]{AHMTT}.
It is a locally integrable function $K\colon\mathbb{R}^{m+n}\to\mathbb{C}$ satisfying the bound
\begin{equation}\label{eqczkernelest}
\big|K(x_1,\ldots,x_m,y_1,\ldots,y_n)\big| \,\lesssim\,
\Big(\sum_{1\leq i_1<i_2\leq m}\!\!|x_{i_1}\!-x_{i_2}| + \sum_{1\leq j_1<j_2\leq n}\!\!|y_{j_1}\!-y_{j_2}|\Big)^{2-m-n}
\end{equation}
for each $(x_1,\ldots,x_m,y_1,\ldots,y_n)\in \mathbb{R}^{m+n}\setminus D$ and such that $K$ is constant on each
$(m+n)$-dimensional dyadic cube \,$I_1\times\ldots\times I_m\times J_1\times\ldots\times J_n$\,
that does not intersect the diagonal.
Finally, we impose the qualitative technical condition that $K$ is bounded and compactly supported,
but without any quantitative information on either its $\mathrm{L}^\infty$-norm, or the size of its support.

Even though (\ref{eqczkernelest}) is a variant of the usual size estimate from multilinear Calder\'{o}n-Zygmund theory,
see the paper by Grafakos and Torres \cite{GT},
we emphasize that the operators we study in this paper can have more complicated structure due to the modulation symmetries.

The rest of the setup is similar to the one in \cite{Kov1}. Choose
$$ E \subseteq \{1,\ldots,m\} \times \{1,\ldots,n\} . $$
We can view $E$ as the set of edges of some simple bipartite undirected graph $G$ on the sets of vertices
$\{x_1,\ldots,x_m\}$ and $\{y_1,\ldots,y_n\}$.
More precisely, $x_i$ and $y_j$ are connected by an edge in this graph if and only if $(i,j)\in E$.
Finally, we define a multilinear form $\Lambda_E$ by
\begin{equation}\label{eqentangledform}
\Lambda_{E}\big((F_{i,j})_{(i,j)\in E}\big)
:= \int_{\mathbb{R}^{m+n}} K(x_1,\ldots,x_m,y_1,\ldots,y_n) \prod_{(i,j)\in E} \!F_{i,j}(x_i,y_j) \ dx_1 \ldots dx_m dy_1 \ldots dy_n .
\end{equation}
Thus, the graph structure determines which functions occur in the above expression.
To avoid degeneracy we assume that there are no isolated vertices in $G$, i.e.\@ each vertex is incident to some edge.
This forces each integration variable to appear in at least one of the
functions.
In all of the following we assume that the functions $F_{i,j}$ are bounded and compactly supported,
so that (\ref{eqentangledform}) is a priori well-defined.

The $|E|$-linear form $\Lambda_{E}$ uniquely determines $|E|$ mutually adjoint $(|E|\!-\!1)$-linear operators $T_{u,v}$, $(u,v)\in E$.
These are explicitly defined by
\begin{align}
& T_{u,v}\big((F_{i,j})_{(i,j)\in E\setminus\{(u,v)\}}\big)(x_u,y_v) \nonumber \\
& := \int_{\mathbb{R}^{m+n-2}} K(x_1,\ldots,x_m,y_1,\ldots,y_n) \prod_{(i,j)\in E\setminus\{(u,v)\}}
\!\!F_{i,j}(x_i,y_j) \ \prod_{i\neq u}dx_i \,\prod_{j\neq v}dy_j , \label{eqtuvoperators}
\end{align}
so that
\begin{equation}\label{eqduality}
\Lambda_{E}\big((F_{i,j})_{(i,j)\in E}\big)
= \int_{\mathbb{R}^2} T_{u,v}\big((F_{i,j})_{(i,j)\neq (u,v)}\big) \,F_{u,v} .
\end{equation}
Let us also recall the definition of the \emph{dyadic BMO-seminorm},
\begin{equation}\label{eqbmodef}
\|F\|_{\mathrm{BMO}(\mathbb{R}^2)}
:= \sup_{Q\in\mathcal{C}}\bigg(\frac{1}{|Q|} \int_{Q} \Big| F - \frac{1}{|Q|}\int_{Q}F \Big|^2 \,\bigg)^{1/2} .
\end{equation}

We are now ready to state the main result of this paper.

\begin{theorem}[Entangled T(1) theorem]\label{theoremmain}
\rule{1mm}{0mm}
\begin{itemize}
\item[(a)]
Suppose that $m,n\geq 2$.
There exist positive integers $(d_{i,j})_{(i,j)\in E}$, depending on $m,n,E$, satisfying
\begin{equation}\label{eqdscondition}
\sum_{(i,j)\in E}\frac{1}{d_{i,j}} > 1 ,
\end{equation}
such that the following holds.
If one has
\begin{equation}\label{eqt1wbpcondition}
|\Lambda_{E}(\mathbf{1}_{Q},\ldots,\mathbf{1}_{Q})|\lesssim |Q| \quad\textrm{for each } Q\in\mathcal{C}
\end{equation}
and
\begin{equation}\label{eqt1condition}
\big\|T_{u,v}(\mathbf{1}_{\mathbb{R}^2},\ldots,\mathbf{1}_{\mathbb{R}^2})\big\|_{\mathrm{BMO}(\mathbb{R}^2)}\lesssim 1
\quad\textrm{for each } (u,v)\in E ,
\end{equation}
then the form $\Lambda_{E}$ satisfies the estimate
\begin{equation}\label{eqt1estimate}
\big|\Lambda_{E}\big((F_{i,j})_{(i,j)\in E}\big)\big| \lesssim \prod_{(i,j)\in E} \|F_{i,j}\|_{\mathrm{L}^{p_{i,j}}(\mathbb{R}^2)}
\end{equation}
whenever exponents $p_{i,j}$ are taken from the region determined by
$$ \sum_{(i,j)\in E} \frac{1}{p_{i,j}}=1 \quad\textrm{and}\quad d_{i,j}<p_{i,j}\leq\infty\textrm{ for each } (i,j)\in E. $$
In particular, it is guaranteed by \emph{(\ref{eqdscondition})}
that Estimate \emph{(\ref{eqt1estimate})} holds in a non-empty open range of exponents.
The implicit constant in \emph{(\ref{eqt1estimate})} depends on implicit constants from
\emph{(\ref{eqczkernelest})},\emph{(\ref{eqt1wbpcondition})},\emph{(\ref{eqt1condition})},
on the graph structure (which was given by $m,n,E$), and on the exponents $p_{i,j}$.

\item[(b)]
Conversely, if Inequality \emph{(\ref{eqt1estimate})} holds for some choice of exponents $1<p_{i,j}\leq\infty$
such that $\sum_{(i,j)\in E}p_{i,j}^{-1}=1$,
then Conditions \emph{(\ref{eqt1wbpcondition})} and \emph{(\ref{eqt1condition})} must also be satisfied,
with constants depending on the constants from \emph{(\ref{eqczkernelest})},\emph{(\ref{eqt1estimate})},
on the graph, and on the exponents.
\end{itemize}
\end{theorem}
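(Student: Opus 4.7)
The plan is a standard $T(1)$ reduction followed by an application of the multilinear Littlewood--Paley machinery of \cite{Kov1}. For part (a), I first set $B_{u,v}:=T_{u,v}(\mathbf{1}_{\mathbb{R}^2},\ldots,\mathbf{1}_{\mathbb{R}^2})$ for every edge $(u,v)\in E$; this is well defined since $K$ is compactly supported, and \eqref{eqt1condition} places $B_{u,v}$ in $\mathrm{BMO}(\mathbb{R}^2)$. For each edge I then construct an entangled paraproduct $\Pi_{u,v}$, an $|E|$-linear form with a perfect dyadic kernel adapted to the graph $E$, whose own $(u,v)$-testing datum equals $B_{u,v}$ and whose other testing data vanish. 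Explicitly $\Pi_{u,v}$ is a sum over $Q\in\mathcal{C}$ of tensor products of a bi-Haar factor paired with $B_{u,v}$ in slot $(u,v)$ and averages in the remaining slots, with the averaging pattern chosen combinatorially from $E$. Such paraproducts are bounded in the claimed $\mathrm{L}^{p_{i,j}}$ range by $\|B_{u,v}\|_{\mathrm{BMO}}\prod\|F_{i,j}\|_{\mathrm{L}^{p_{i,j}}}$ via the results of \cite{Kov1}. Subtracting $\sum_{(u,v)}\Pi_{u,v}$ from $\Lambda_E$ yields a reduced form $\Lambda_E'$ with $T'_{u,v}(\mathbf{1}_{\mathbb{R}^2},\ldots,\mathbf{1}_{\mathbb{R}^2})=0$ on every edge, while the weak boundedness property \eqref{eqt1wbpcondition} is preserved up to an additive constant.

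For $\Lambda_E'$ I expand each $F_{i,j}$ in the bi-Haar basis indexed by $\mathcal{C}$. Perfectness of $K$ collapses the multiscale sum: $K$ is constant on any dyadic cube disjoint from $D$, so any term in which a Haar factor sits at a strictly coarser scale than all others is killed by the Haar mean-zero property. Only the ``on-scale'' configurations survive, each localizable to a common square $Q\in\mathcal{C}$. The partial cancellation $T'_{u,v}(\mathbf{1},\ldots,\mathbf{1})=0$ then eliminates all terms with exactly one Haar slot (these are precisely what the paraproducts $\Pi_{u,v}$ absorbed). What remains is a sum over $Q\in\mathcal{C}$ of local $|E|$-linear expressions carrying at least two Haar factors, whose fully-Haar constants are controlled by \eqref{eqt1wbpcondition}. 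This is exactly the situation treated in \cite{Kov1} by multilinear Littlewood--Paley inequalities, which yield $|\Lambda_E'((F_{i,j}))|\lesssim \prod\|F_{i,j}\|_{\mathrm{L}^{p_{i,j}}}$ in the range $d_{i,j}<p_{i,j}\leq\infty$. Hypothesis \eqref{eqdscondition} is precisely the requirement that this range intersect the scaling hyperplane $\sum 1/p_{i,j}=1$.

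Part (b) is the easy direction. Substituting $F_{i,j}=\mathbf{1}_Q$ into \eqref{eqt1estimate} gives \eqref{eqt1wbpcondition} via $\prod|Q|^{1/p_{i,j}}=|Q|$. For \eqref{eqt1condition} I pair $B_{u,v}$ against a bi-Haar atom $h_Q$ supported on $Q$; by \eqref{eqduality} this equals $\Lambda_E$ with $h_Q$ in slot $(u,v)$ and $\mathbf{1}_{\mathbb{R}^2}$ in the others. Truncating the constant factors to a sufficiently large dyadic dilate $Q^*\supseteq Q\cup\mathrm{supp}\,K$ and using the Calder\'on--Zygmund size bound \eqref{eqczkernelest} together with the WBP to control the tail yields $|\langle B_{u,v},h_Q\rangle|\lesssim |Q|^{1/2}$, which is the bi-Haar characterization of dyadic BMO on $\mathbb{R}^2$.

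The main obstacle is the construction of the paraproducts $\Pi_{u,v}$ in the first step and the verification that the Littlewood--Paley sum in the second step is controlled on a non-empty open range. Both tasks are sensitive to the bipartite graph $E$: the averaging pattern on the remaining edges must be selected so that only the desired testing datum survives and so that $\Pi_{u,v}$ fits into the framework of \cite{Kov1}; the resulting per-edge ranges of exponents are compiled across all edges into the integers $d_{i,j}$, and hypothesis \eqref{eqdscondition} encodes the graph-combinatorial requirement that at least one admissible choice of $(p_{i,j})$ exists on the scaling hyperplane.
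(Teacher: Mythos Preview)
Your strategy for Part~(a) diverges from the paper's and contains a real obstruction. The paper expands the \emph{kernel} $K$ in the tensor-Haar basis on $\mathbb{R}^{m+n}$; because $K$ depends on each variable exactly once, perfectness collapses the expansion to a finite sum of paraproducts indexed by $(S,T)$ with $S\subseteq\{1,\dots,m\}$, $T\subseteq\{1,\dots,n\}$, $S\cup T\neq\emptyset$. It then bounds each paraproduct directly: the cancellative ones (those with $\max\{|S|,|T|\}\geq 2$, or $S=\{i\},T=\{j\},(i,j)\notin E$) receive an $\ell^\infty$ coefficient bound from the weak boundedness property, while the non-cancellative ones receive a Carleson (bmo) coefficient bound from~\eqref{eqt1condition}; both are then fed into the single-tree machinery of~\cite{Kov1}. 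No subtraction of paraproducts is performed.

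Your proposed subtraction ``one $\Pi_{u,v}$ per edge with all other testing data zero'' is in fact impossible in general. The paper's computation of $T_{u,v}(\mathbf{1},\ldots,\mathbf{1})$ shows it is assembled from coefficients of types $(\emptyset,\{v\})$, $(\{u\},\emptyset)$, $(\{u\},\{v\})$. The first of these also contributes to $T_{u',v}(\mathbf{1},\ldots,\mathbf{1})$ for \emph{every} $u'$ with $(u',v)\in E$; hence the $\mathbf{1}_I\otimes\mathbf{h}_J$-component of $B_{u,v}$ cannot be absorbed into any perfect-dyadic form whose $(u',v)$-testing datum vanishes whenever $y_v$ has degree $\geq 2$ in $G$. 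Relatedly, the non-cancellative paraproducts are not characterized by ``exactly one Haar slot'': type~(NC2) carries Haar factors on two variables. Finally, expanding the \emph{functions} $F_{i,j}$ rather than the kernel does not diagonalize to a single $Q\in\mathcal{C}$, since distinct functions share variables and their independently chosen scales need not align.

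Your Part~(b) also has a gap. A uniform bound $|\langle B_{u,v},h_Q\rangle|\lesssim|Q|^{1/2}$ on individual Haar pairings is strictly weaker than dyadic BMO, which is the Carleson condition $\sup_{Q_0}|Q_0|^{-1}\sum_{Q\subseteq Q_0}|Q||\lambda_Q|^2<\infty$; the former does not imply the latter. Also, truncating the $\mathbf{1}_{\mathbb{R}^2}$ entries to a large $Q^*\supseteq\mathrm{supp}\,K$ and applying~\eqref{eqt1estimate} produces a bound scaling like $|Q^*|^{1-1/p_{u,v}}$, not $|Q|$. The paper instead first derives the \emph{local} estimate $\|T_{u,v}(\mathbf{1}_Q,\ldots,\mathbf{1}_Q)\|_{\mathrm{L}^1(Q)}\lesssim|Q|$ from~\eqref{eqt1estimate} via duality and Jensen, and then uses perfectness of $K$ to show that $T_{u,v}(\mathbf{1}_{\mathbb{R}^2},\ldots,\mathbf{1}_{\mathbb{R}^2})$ and $T_{u,v}(\mathbf{1}_Q,\ldots,\mathbf{1}_Q)$ differ by a constant on $Q$, so the oscillation on each $Q$---and hence the BMO seminorm---is controlled.
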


For a first insight into concrete choices of $d_{i,j}$ the reader is referred
to (\ref{eqchoiceofds1}), which applies whenever the $d_{i,j}$ such defined
satisfy (\ref{eqdscondition}). This happens in most instances, the few
exceptional cases are discussed in Section \ref{sectionexponents}.
Generally, the exponent region increases as $|E|$ gets larger with fixed $m,n$.
The actual range of estimates is not known even for very simple instances of entangled forms, such as $\Lambda_{\sqcup}$,
and it is clear that known techniques are insufficient to address the question of optimal range.
Disheartened by this fact we did not attempt to make the region of exponents
as large as possible within our set of available techniques.

Condition (\ref{eqt1wbpcondition}) is a version of a
\emph{weak boundedness property}. Usually weak boundedness properties involve
testing with bump functions (see \cite{DJ}),
but here it is natural to test characteristic functions of dyadic squares.
The T(1)-type conditions in (\ref{eqt1condition}) are similar to requirements of the classical T(1) theorem from \cite{DJ},
but we need to perform $|E|$ verifications, instead of only two.
One might compare this situation with the T(1) theorem in \cite{BDNTTV}, dealing with modulation-invariant trilinear forms
and having three conditions of that kind.

The emphasis of Theorem \ref{theoremmain} is on the qualitative $\mathrm{L}^p$ bound for $\Lambda_E$ and on constant dependencies,
since mere continuity of $\Lambda_E$ is automatic by our assumptions on $K$.
In applications one would apply the theorem to truncated and localized kernels $K$.

We can restate Theorem \ref{theoremmain} in terms of \emph{local T(1) conditions}, compare with \cite[Corollary 6.3]{AHMTT}.
Their continuous analogues are the well-known \emph{restricted boundedness conditions} from \cite{Ste}.
In the following formulation we will also emphasize the generalized modulation invariance we mentioned in the Introduction.
Suppose that we are given two one-dimensional functions $a_{Q}^{i,j},b_{Q}^{i,j}\in\mathrm{L}^{\infty}(\mathbb{R})$
for each dyadic square $Q=I\times J\in\mathcal{C}$ and for each $(i,j)\in E$.
We require that
\begin{equation}\label{eqasandbs}
\prod_{j \,:\, (i,j)\in E}\!\!a_{Q}^{i,j} = \mathbf{1}_{I} \,\textrm{ for every } i,
\qquad \prod_{i \,:\, (i,j)\in E}\!\!b_{Q}^{i,j} = \mathbf{1}_{J} \,\textrm{ for every } j ,
\end{equation}
each $a_{Q}^{i,j}$ vanishes outside $I$, and each $b_{Q}^{i,j}$ vanishes outside $J$.
To simplify the notation we also write
$$ B_{Q}^{i,j} := a_{Q}^{i,j} \otimes b_{Q}^{i,j} , $$
where $f\otimes g$ denotes the elementary tensor defined by $(f\otimes g)(x,y):=f(x)g(y)$.

\begin{theorem}[Entangled T(1) theorem, reformulation]\label{theoremreformulated}
For $m,n\geq 2$ there exist positive integers $(d_{i,j})_{(i,j)\in E}$ depending on $m,n,E$,
satisfying \emph{(\ref{eqdscondition})}, and such that the following holds.
If one can verify
\begin{equation}\label{eqt1restrictedorig}
\big\|T_{u,v}\big((B_{Q}^{i,j})_{(i,j)\in E\setminus\{(u,v)\}}\big)
B_{Q}^{u,v} \big\|_{\mathrm{L}^{1}(\mathbb{R}^2)}\lesssim |Q|
\end{equation}
for each $Q=I\times J$ and for each $(u,v)\in E$,
then Estimate \emph{(\ref{eqt1estimate})} holds in the same range of exponents as in Theorem \ref{theoremmain}.
The implicit constant depends on implicit constants from \emph{(\ref{eqczkernelest})},\emph{(\ref{eqt1restrictedorig})},
on $m,n,E$, and on the exponents $p_{i,j}$.
\end{theorem}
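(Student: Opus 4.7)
My plan is to deduce Theorem \ref{theoremreformulated} from Theorem \ref{theoremmain} by verifying that the local condition \eqref{eqt1restrictedorig} forces both the weak boundedness property \eqref{eqt1wbpcondition} and the global $T(1)$-BMO condition \eqref{eqt1condition}; the $\mathrm{L}^p$ estimate \eqref{eqt1estimate} in the stated range then follows from Theorem \ref{theoremmain}.

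The weak boundedness is verified with the most trivial choice of test data. For any dyadic square $Q=I\times J$, take $a_Q^{i,j}:=\mathbf{1}_I$ and $b_Q^{i,j}:=\mathbf{1}_J$ for every $(i,j)\in E$. Since $\mathbf{1}_I^k=\mathbf{1}_I$ pointwise for each $k\geq 1$ and no vertex of $G$ is isolated, the constraints in \eqref{eqasandbs} reduce to $\mathbf{1}_I$ and $\mathbf{1}_J$, respectively. All $B_Q^{i,j}$ then equal $\mathbf{1}_Q$, so \eqref{eqt1restrictedorig} becomes $\|T_{u,v}(\mathbf{1}_Q,\ldots,\mathbf{1}_Q)\mathbf{1}_Q\|_{\mathrm{L}^1(\mathbb{R}^2)}\lesssim|Q|$, and combining this with \eqref{eqduality} and the crude bound $|\int fg|\leq\|fg\|_{\mathrm{L}^1}$ yields \eqref{eqt1wbpcondition}.

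The BMO condition \eqref{eqt1condition} is the substantial part. Fix $(u,v)\in E$ and set $\Psi:=T_{u,v}(\mathbf{1}_{\mathbb{R}^2},\ldots,\mathbf{1}_{\mathbb{R}^2})$. By the dyadic John--Nirenberg inequality, it suffices to prove the $\mathrm{L}^1$-mean oscillation bound $\frac{1}{|R|}\int_R|\Psi-c_R|\lesssim 1$, uniformly over dyadic squares $R=I\times J$, for a suitably chosen constant $c_R$. For each $(i,j)\neq(u,v)$ I decompose $\mathbf{1}_{\mathbb{R}^2}(x_i,y_j)=\mathbf{1}_R(x_i,y_j)+(\mathbf{1}_{\mathbb{R}^2}-\mathbf{1}_R)(x_i,y_j)$ and expand $\Psi$ multilinearly; vertex-consistency leaves at most $2^{m+n-2}$ non-vanishing summands, indexed by a near/far choice at each non-$(u,v)$ vertex of $G$. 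The ``all-near'' summand $\Psi_{\mathrm{near}}:=T_{u,v}(\mathbf{1}_R,\ldots,\mathbf{1}_R)$ obeys $\|\Psi_{\mathrm{near}}\|_{\mathrm{L}^1(R)}\lesssim|R|$ by applying \eqref{eqt1restrictedorig} once more with the constant test data above and $B_R^{u,v}=\mathbf{1}_R$.

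The remaining summands are handled by the perfect dyadic structure of $K$. In each such summand, telescoping $\mathbf{1}_{I^c}=\sum_{k\geq 1}\mathbf{1}_{I^{(k)}\setminus I^{(k-1)}}$, and likewise for $J^c$, confines every coordinate to a dyadic interval of common scale $s\geq|I|$, and with at least one vertex ``far'' at the maximal scale the enclosing $(m+n)$-dyadic cube contains two differing coordinate intervals, hence avoids the diagonal $D$. Perfectness of $K$ therefore forces $K$ to be constant on this cube, and the integral in the $m+n-2$ remaining variables is independent of $(x_u,y_v)\in R$. Defining $c_R$ as the sum of these constant contributions yields $\|\Psi-c_R\|_{\mathrm{L}^1(R)}=\|\Psi_{\mathrm{near}}\|_{\mathrm{L}^1(R)}\lesssim|R|$, and John--Nirenberg upgrades this to the $\mathrm{L}^2$-oscillation of \eqref{eqbmodef}. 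The main obstacle is the combinatorial bookkeeping just described: for a general graph $G$ one has to systematically check, across every nonzero far summand, that the enclosing dyadic cube is really non-diagonal and that its contribution is genuinely $(x_u,y_v)$-independent on $R$; this is transparent in the archetypal case $\Lambda_\sqcup$ but requires a uniform argument for arbitrary $E$.
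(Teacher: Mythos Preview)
Your strategy is the paper's strategy: reduce to Theorem~\ref{theoremmain} by showing that the local $\mathrm{L}^1$ condition \eqref{eqt1restrictedorig} (equivalently \eqref{eqt1restricted}) implies both \eqref{eqt1wbpcondition} and \eqref{eqt1condition}. The verification of \eqref{eqt1wbpcondition} and the use of John--Nirenberg to upgrade an $\mathrm{L}^1$ oscillation bound to \eqref{eqt1condition} are identical to the paper. The key step---showing that $T_{u,v}(\mathbf{1}_{\mathbb{R}^2},\ldots,\mathbf{1}_{\mathbb{R}^2})-T_{u,v}(\mathbf{1}_Q,\ldots,\mathbf{1}_Q)$ is constant on $Q$---is also the same in spirit, but the paper carries it out more directly.

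The paper exploits the qualitative compact support of $K$ (say in $[-r,r]^{m+n}$) to write $\mathbf{1}_{\mathbb{R}^2}$ as a \emph{finite} sum of indicators $\mathbf{1}_{Q'}$ over dyadic squares $Q'$ of the \emph{same} side length as $Q$. After expanding, each nonzero summand is determined (via the no-isolated-vertex hypothesis) by a single choice of dyadic intervals $I^{(1)},\ldots,I^{(m)},J^{(1)},\ldots,J^{(n)}$, all of length $|I|$, with $I^{(u)}=I$ and $J^{(v)}=J$. Whenever these are not all equal to $I$ and $J$, the cube $I^{(1)}\times\cdots\times J^{(n)}$ misses the diagonal and perfectness makes that summand constant on $Q$ immediately. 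This sidesteps the multi-scale telescoping $\mathbf{1}_{I^c}=\sum_{k\ge1}\mathbf{1}_{I^{(k)}\setminus I^{(k-1)}}$ you propose and the attendant ``combinatorial bookkeeping'' you flag; no passage to a common scale $s$ is needed because every interval already has the same length. Your telescoping argument can be made to work, but it is strictly more laborious and, as written, slightly imprecise: the edge-by-edge splitting $\mathbf{1}_{\mathbb{R}^2}=\mathbf{1}_R+(\mathbf{1}_{\mathbb{R}^2}-\mathbf{1}_R)$ yields $2^{|E|-1}$ summands whose supports need not be single vertex-level near/far cells (the far factor $1-\mathbf{1}_I(x_i)\mathbf{1}_J(y_j)$ does not separate variables), so the reduction to $2^{m+n-2}$ summands ``indexed by a near/far choice at each vertex'' requires an extra argument. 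The cleaner route is to decompose the integration domain $\mathbb{R}^{m+n-2}$ vertex by vertex from the outset, or simply to adopt the paper's same-scale tiling using compact support.
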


Note that (\ref{eqasandbs}) can be equivalently written as
$$ \prod_{(i,j)\in E} a_{Q}^{i,j}(x_i) b_{Q}^{i,j}(y_j) = \prod_{i=1}^{m}\mathbf{1}_{I}(x_i) \prod_{j=1}^{n}\mathbf{1}_{J}(y_j)  . $$
By the one-dimensional modulation invariance of (\ref{eqtuvoperators}) we instantly observe
$$ T_{u,v}\big(a_{Q}^{i,j}\!\otimes\!b_{Q}^{i,j}\big)_{(i,j)\in E\setminus\{(u,v)\}}
\,\big(a_{Q}^{u,v}\!\otimes\!b_{Q}^{u,v}\big)
= T_{u,v}(\mathbf{1}_{I}\!\otimes\!\mathbf{1}_{J})_{(i,j)\in E\setminus\{(u,v)\}}(\mathbf{1}_{I}\!\otimes\!\mathbf{1}_{J}) , $$
so that Condition (\ref{eqt1restrictedorig}) equivalently reads
\begin{equation}\label{eqt1restricted}
\big\|T_{u,v}(\mathbf{1}_{Q},\ldots,\mathbf{1}_{Q})\big\|_{\mathrm{L}^{1}(Q)}\lesssim |Q| .
\end{equation}
In the following sections we can assume that this reduction is already performed.
An interesting consequence of Theorem \ref{theoremreformulated}, as it is formulated, is that in verification of the Bound (\ref{eqt1estimate}),
it is enough to assume that all functions $F_{i,j}$ but one are indeed elementary tensors,
in which case the entangled structure disappears.
On the other hand, Theorem \ref{theoremreformulated} is not really a T(b)-type theorem, as the functions $B_{Q}^{i,j}$ have to be chosen
under quite special constraints.

Let us turn to particular instances of our general result, assuming that Conditions (\ref{eqt1wbpcondition}) and (\ref{eqt1condition}) are satisfied.
If we take $m=n=2$ and $E=\{(1,1),(1,2),(2,1)\}$, we obtain (\ref{eqtwistedparaproduct}), written as
$$ \Lambda_{\sqcup}(F_{1,1},F_{1,2},F_{2,1})
= \int_{\mathbb{R}^4} F_{1,1}(x_1,y_1) F_{1,2}(x_1,y_2) F_{2,1}(x_2,y_1) \,K(x_1,x_2,y_1,y_2) \, dx_1 dx_2 dy_1 dy_2 . $$
The corresponding graph is depicted in the left half of Figure \ref{figureexamples}.
By taking a look at the numbers $d_{i,j}$ defined by (\ref{eqchoiceofds1}), we see that Estimate (\ref{eqt1estimate}) holds in the range
\,$p_{1,1}^{-1}+p_{1,2}^{-1}+p_{2,1}^{-1}=1$, \,$p_{1,1},p_{1,2},p_{2,1}>2$.
On the other hand, there is no good dyadic model for (\ref{eqtriangular}) that would be covered by Theorem \ref{theoremmain}.
The closest in spirit is the ``four-cyclic'' quadrilinear form
\begin{align*}
& \Lambda_{\Box}(F_{1,1},F_{1,2},F_{2,1},F_{2,2}) \\
& = \int_{\mathbb{R}^4} F_{1,1}(x_1,y_1) F_{1,2}(x_1,y_2) F_{2,1}(x_2,y_1) F_{2,2}(x_2,y_2) \,K(x_1,x_2,y_1,y_2) \, dx_1 dx_2 dy_1 dy_2 .
\end{align*}
It is determined by $m=n=2$ and $E=\{1,2\}\!\times\!\{1,2\}$; see the right half of Figure \ref{figureexamples}.
This time we establish the Bound (\ref{eqt1estimate}) for
\,$p_{1,1}^{-1}+p_{1,2}^{-1}+p_{2,1}^{-1}+p_{2,2}^{-1}=1$, \,$p_{1,1},p_{1,2},p_{2,1},p_{2,2}>2$.
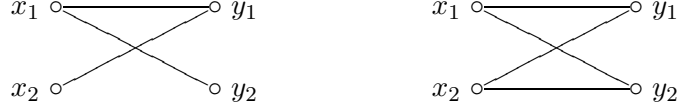
\begin{figure}
$$ \xymatrix @C=1.2mm @M=0mm @W=0mm { x_1 & \circ \ar@{-}[rrrrrrrrrrrrrrrr] \ar@{-}[drrrrrrrrrrrrrrrr] &&&&&&&&&&&&&&&& \circ & y_1 \\
x_2 & \circ \ar@{-}[urrrrrrrrrrrrrrrr] &&&&&&&&&&&&&&&& \circ & y_2 }
\qquad\qquad\qquad
\xymatrix @C=1.2mm @M=0mm @W=0mm { x_1 & \circ \ar@{-}[rrrrrrrrrrrrrrrr] \ar@{-}[drrrrrrrrrrrrrrrr] &&&&&&&&&&&&&&&& \circ & y_1 \\
x_2 & \circ \ar@{-}[urrrrrrrrrrrrrrrr] \ar@{-}[rrrrrrrrrrrrrrrr] &&&&&&&&&&&&&&&& \circ & y_2 } $$
\caption{Bipartite graphs associated with $\Lambda_{\sqcup}$ and $\Lambda_{\Box}$ respectively.}
\label{figureexamples}
\end{figure}

We also comment that if $m=n$ and each component of $G$ contains only one edge, then (after relabeling) we can write
$$ \Lambda_{E}(F_{1,1},\ldots,F_{n,n}) = \int_{(\mathbb{R}^2)^n} K(x_1,\ldots,x_n,y_1,\ldots,y_n)
\prod_{j=1}^{n} F_{j,j}(x_j,y_j) \,dx_1 dy_1\ldots dx_n dy_n . $$
In this case Theorems \ref{theoremmain} and \ref{theoremreformulated}
recover dyadic variants of classical results from multilinear Calder\'{o}n-Zygmund theory, as it was developed by Grafakos and Torres \cite{GT}.
As is seen from (\ref{eqchoiceofds1}), we reprove Estimate
(\ref{eqt1estimate}) in this case whenever
\,$\sum_{j=1}^{n}p_{j,j}^{-1}=1$\, and $p_{j,j}>1$ for each $j$.

As for the organization of the paper,
Sections \ref{sectionsufficiency} and \ref{sectionexponents} establish Part (a) of Theorem \ref{theoremmain} in all possible cases,
while Section \ref{sectionnecessity} proves Theorem \ref{theoremreformulated} and then reduces Part (b) of Theorem \ref{theoremmain} to it.
In Section \ref{sectionexponents} we also explain why we require $m,n\geq 2$ by giving an appropriate counterexample in the case $\min\{m,n\}=1$.

\section{Sufficiency of the testing conditions}
\label{sectionsufficiency}

The most substantial argument is the proof of Part (a) of Theorem \ref{theoremmain} and we present it in this section.
By multilinearity of $\Lambda_E$ we can assume that functions $F_{i,j}$ are nonnegative.

We begin by introducing quantities that will determine the range of exponents in which $\mathrm{L}^p$ estimates for $\Lambda_E$ will be proven.
They are obtained by splitting $G$ into connected components $G_1,G_2,\ldots,G_k$.
Vertices of each component $G_\ell$ of $G$ are conveniently indexed by the sets
$$ \mathcal{X}_{\ell} := \big\{i\in\{1,\ldots,m\} : x_i\textrm{ is a vertex in }G_\ell\big\}
\ \ \textrm{and}\ \ \mathcal{Y}_{\ell} := \big\{j\in\{1,\ldots,n\} : y_j\textrm{ is a vertex in }G_\ell\big\} , $$
while its edges are determined by $E_{\ell} := E\cap(\mathcal{X}_{\ell}\times\mathcal{Y}_{\ell})$.
Take a connected component $G_\ell$ and for each $(i,j)\in E_\ell$ set
\begin{equation}\label{eqchoiceofds1}
d_{i,j} := \left\{\begin{array}{cl}
\max\{|\mathcal{X}_\ell|,|\mathcal{Y}_\ell|\} & \textrm{ if }E_\ell=\mathcal{X}_{\ell}\times\mathcal{Y}_{\ell}
\textrm{ or }\max\{|\mathcal{X}_\ell|,|\mathcal{Y}_\ell|\}\leq 2, \\
\max\{|\mathcal{X}_\ell|,|\mathcal{Y}_\ell|\}+1 & \textrm{ otherwise}.
\end{array}\right.
\end{equation}
Observe that $E_\ell=\mathcal{X}_{\ell}\times\mathcal{Y}_{\ell}$ simply means that the component $G_\ell$ is a complete bipartite graph.
We keep this choice throughout this section and it will be sufficient for ``most'' graphs $G$,
i.e.\@ for all but a single series of exceptional cases.
In Section \ref{sectionexponents} we will discuss non-triviality of that exponent range and treat those exceptional instances differently.

\subsection{Decomposition into paraproducts}
We write $\mathbf{h}_{I}$ for the $\mathrm{L}^\infty$-normalized Haar wavelet on a dyadic interval $I$,
i.e.\@ $\mathbf{h}_{I}=\mathbf{1}_{I_\mathrm{left}}-\mathbf{1}_{I_\mathrm{right}}$,
where $I_\mathrm{left}$ and $I_\mathrm{right}$ denote the two halves of $I$.
Consider an orthonormal basis of $\mathrm{L}^{2}(\mathbb{R}^{m+n})$ consisting of the functions
$$ \frac{\mathbf{a}^{(1)}_{I_1}}{|I_1|^{1/2}}\otimes\ldots\otimes\frac{\mathbf{a}^{(m)}_{I_m}}{|I_m|^{1/2}}
\otimes\frac{\mathbf{b}^{(1)}_{J_1}}{|J_1|^{1/2}}\otimes\ldots\otimes\frac{\mathbf{b}^{(n)}_{J_n}}{|J_n|^{1/2}} , $$
where $I_1\times\ldots\times I_m\times J_1\times\ldots\times J_n$ ranges over all $(m+n)$-dimensional dyadic cubes and
each of the letters $\mathbf{a}^{(i)}$ and $\mathbf{b}^{(j)}$ stands for either the characteristic function $\mathbf{1}$
or the Haar function $\mathbf{h}$, excluding the possibility that all of them are simultaneously $\mathbf{1}$.
Since $K$ is a square integrable function by our qualitative assumptions, it can be decomposed in the above basis as
\begin{align*}
& K(x_1,\ldots,x_m,y_1,\ldots,y_n) \\
& = \sum_{\substack{I_1\times\ldots\times I_m\times J_1\times\ldots\times J_n\in\mathcal{C}_{m+n} \\
\mathbf{a}^{(1)},\ldots,\mathbf{a}^{(m)},\,\mathbf{b}^{(1)},\ldots,\mathbf{b}^{(n)}\in\{\mathbf{1},\mathbf{h}\} \\
\mathbf{a}^{(i)},\,\mathbf{b}^{(j)}\textrm{ are not all equal } \mathbf{1} }}
\nu_{I_1\times\ldots\times I_m\times J_1\times\ldots\times J_n}^{\mathbf{a}^{(1)},\ldots,\mathbf{a}^{(m)},\mathbf{b}^{(1)},\ldots,\mathbf{b}^{(n)}}
\ \mathbf{a}^{(1)}_{I_1}(x_1)\ldots \mathbf{a}^{(m)}_{I_m}(x_m) \mathbf{b}^{(1)}_{J_1}(y_1)\ldots \mathbf{b}^{(n)}_{J_n}(y_n) ,
\end{align*}
where
\begin{equation}\label{eqkerneldecomp}
\nu_{I_1\times\ldots\times I_m\times J_1\times\ldots\times J_n}^{\mathbf{a}^{(1)},\ldots,\mathbf{a}^{(m)},\mathbf{b}^{(1)},\ldots,\mathbf{b}^{(n)}}
:= \bigg\langle K\,,\, \frac{\mathbf{a}^{(1)}_{I_1}}{|I_1|}\otimes\ldots\otimes\frac{\mathbf{a}^{(m)}_{I_m}}{|I_m|}
\otimes\frac{\mathbf{b}^{(1)}_{J_1}}{|J_1|}\otimes\ldots\otimes\frac{\mathbf{b}^{(n)}_{J_n}}{|J_n|} \bigg\rangle_{\mathrm{L}^2(\mathbb{R}^{m+n})}
\end{equation}
and $\langle\cdot,\cdot\rangle_{\mathrm{L}^2}$ denotes the standard inner product.
Now we use the perfect cancellation condition. Since $K$ is constant on cubes that do not intersect the diagonal,
the corresponding coefficients (\ref{eqkerneldecomp}) are zero, so only the terms with $I_1=\ldots=I_m$ and $J_1=\ldots=J_n$ remain.
This leads to a representation of $\Lambda_{E}$ as a finite sum of \emph{entangled dyadic paraproducts},
\begin{equation}\label{eqdecompparaprod}
\Lambda_{E} = \sum_{\substack{\mathbf{a}^{(1)},\ldots,\mathbf{a}^{(m)},\,\mathbf{b}^{(1)},\ldots,\mathbf{b}^{(n)}\in\{\mathbf{1},\mathbf{h}\} \\
\mathbf{a}^{(i)},\,\mathbf{b}^{(j)}\textrm{ are not all equal } \mathbf{1} }}
\Theta_{E}^{\mathbf{a}^{(1)},\ldots,\mathbf{a}^{(m)},\mathbf{b}^{(1)},\ldots,\mathbf{b}^{(n)}} .
\end{equation}
These are defined explicitly by
\begin{align}
\Theta\big((F_{i,j})_{(i,j)\in E}\big) = & \
\Theta_{E}^{\mathbf{a}^{(1)},\ldots,\mathbf{a}^{(m)},\mathbf{b}^{(1)},\ldots,\mathbf{b}^{(n)}}\big((F_{i,j})_{(i,j)\in E}\big) \nonumber \\
:= & \sum_{I\times J\in\mathcal{C}} \lambda_{I\times J} \ |I|^{2-m-n} \int_{\mathbb{R}^{m+n}} \prod_{(i,j)\in E} F_{i,j}(x_i,y_j) \nonumber \\
& \qquad \mathbf{a}^{(1)}_{I}(x_1)\ldots \mathbf{a}^{(m)}_{I}(x_m) \mathbf{b}^{(1)}_{J}(y_1)\ldots \mathbf{b}^{(n)}_{J}(y_n)
\,dx_1 \ldots dx_m dy_1 \ldots dy_n , \label{eqparaproductsdef}
\end{align}
with coefficients
\begin{align}
\lambda_{I\times J} = & \ \lambda_{I\times J}^{\mathbf{a}^{(1)},\ldots,\mathbf{a}^{(m)},\mathbf{b}^{(1)},\ldots,\mathbf{b}^{(n)}} \nonumber \\
:= & \ |I|^{m+n-2}
\,\nu_{I\times\ldots\times I\times J\times\ldots\times J}^{\mathbf{a}^{(1)},\ldots,\mathbf{a}^{(m)},\mathbf{b}^{(1)},\ldots,\mathbf{b}^{(n)}} \nonumber \\
= & \ |I\times J|^{-1} \big\langle K\,,\, \mathbf{a}^{(1)}_{I}\!\otimes\!\ldots\!\otimes \mathbf{a}^{(m)}_{I}
\!\otimes \mathbf{b}^{(1)}_{J}\!\otimes\!\ldots\!\otimes \mathbf{b}^{(n)}_{J} \big\rangle_{\mathrm{L}^2(\mathbb{R}^{m+n})} . \label{eqcoefficientsdef}
\end{align}
Since we will be working with a fixed choice of $\mathbf{a}^{(1)},\ldots,\mathbf{a}^{(m)},\mathbf{b}^{(1)},\ldots,\mathbf{b}^{(n)}$ most of the time,
we omit denoting that dependence in the following text and simply write $\Theta$ and $\lambda_{I\times J}$.
From Decomposition (\ref{eqdecompparaprod}) we see that it is enough to prove Bound (\ref{eqt1estimate})
for each entangled dyadic paraproduct $\Theta$.

Let us introduce some convenient notation.
The average value of a function $f$ on an interval $I$ will be denoted by $[f(x)]_{x\in I}$,
while the same average of the function $f\mathbf{h}_I$ will be denoted by $\langle f(x)\rangle_{x\in I}$.
In other words,
$$ [f(x)]_{x\in I} := \frac{1}{|I|}\int_{\mathbb{R}}f(x)\mathbf{1}_{I}(x)dx,
\qquad \langle f(x)\rangle_{x\in I} := \frac{1}{|I|}\int_{\mathbb{R}}f(x)\mathbf{h}_{I}(x)dx . $$
We will be dealing with multi-variable expressions, so it will be important to clarify in which variable the average is taken.
To save space we allow joining several averaging procedures into a single bracket notation, with all averaging
variables listed in its subscript.
If we set
$$ {\setlength{\arraycolsep}{2pt}\begin{array}{llll}
S & := \big\{ i\in\{1,\ldots,m\} : \mathbf{a}^{(i)}=\mathbf{h} \big\}, & \quad T & := \big\{ j\in\{1,\ldots,n\} : \mathbf{b}^{(j)}=\mathbf{h} \big\}, \\
S^c & := \big\{ i\in\{1,\ldots,m\} : \mathbf{a}^{(i)}=\mathbf{1} \big\}, & \quad T^c & := \big\{ j\in\{1,\ldots,n\} : \mathbf{b}^{(j)}=\mathbf{1} \big\},
\end{array}} $$
then the paraproduct given in (\ref{eqparaproductsdef}) can be written as
$$ \Theta\big((F_{i,j})_{(i,j)\in E}\big) =
\sum_{I\times J\in\mathcal{C}} \lambda_{I\times J} \, |I\times J| \, \mathcal{A}_{I\times J}\big((F_{i,j})_{(i,j)\in E}\big) , $$
where
\begin{equation}\label{eqparaproductterm}
\mathcal{A}_{I\times J}\big((F_{i,j})_{(i,j)\in E}\big) := \bigg[ \bigg\langle \prod_{(i,j)\in E} F_{i,j}(x_i,y_j)
\bigg\rangle_{\substack{x_i\in I\mathrm{\,for\,each\,}i\in S\\ y_j\in J\mathrm{\,for\,each\,}j\in T}}
\,\bigg]_{\substack{x_i\in I\mathrm{\,for\,each\,}i\in S^c\\ y_j\in J\mathrm{\,for\,each\,}j\in T^c}} .
\end{equation}

We continue with some quite standard terminology.
\emph{Child} and \emph{parent} relations on dyadic squares are defined as usually, so that each square has four children and exactly one parent.
A \emph{finite convex tree} is any finite collection $\mathcal{T}$ of dyadic squares that is convex with respect to the set inclusion
and that contains the largest square (i.e.\@ a square that covers all others).
It is denoted $Q_\mathcal{T}$ and called the \emph{tree-top} of $\mathcal{T}$.
\emph{Leaves} of $\mathcal{T}$ are then defined to be dyadic squares that are not members of $\mathcal{T}$
but their parents are. The family of leaves is denoted $\mathcal{L}(\mathcal{T})$.
One can define a localized absolute version of each $\Theta$ for a finite convex tree $\mathcal{T}$ simply as
$$ \Theta_{\mathcal{T}} := \sum_{Q\in\mathcal{T}} |\lambda_{Q}| \,|Q| \,|\mathcal{A}_{Q}| . $$

By a classical stopping time argument one can reduce the desired $\mathrm{L}^p$ estimate for $\Theta$ to a ``single tree estimate'',
\begin{equation}\label{eqsingletreeest}
\Theta_{\mathcal{T}}\big((F_{i,j})_{(i,j)\in E}\big) \,\lesssim\, |Q_\mathcal{T}|
\prod_{(i,j)\in E} \max_{Q\in\mathcal{T}\cup\mathcal{L}(\mathcal{T})}\big[F_{i,j}^{d_{i,j}}\big]_{Q}^{1/d_{i,j}}
\end{equation}
for each finite convex tree $\mathcal{T}$ with tree-top $Q_\mathcal{T}$.
Here $[F]_Q$ denotes the average value of a function $F$ on a square $Q$.
This reduction can be found in \cite[Section 4]{Kov1}, which is tailored exactly to this situation,
as all arguments given there are fairly general.
We only remark that \cite{Kov1} does not explicitly mention any estimates involving $\mathrm{L}^\infty(\mathbb{R}^2)$, but these are also easily derived.
If $p_{i,j}=\infty$ for some $(i,j)\in E$, then for those pairs $(i,j)$ we simply control $\big[F_{i,j}^{d_{i,j}}\big]_{Q}^{1/d_{i,j}}$ in
(\ref{eqsingletreeest}) by $\|F_{i,j}\|_{\mathrm{L}^\infty(\mathbb{R}^2)}$ and apply the stopping time procedure
to the remaining functions.

In order to establish (\ref{eqsingletreeest}) we need to distinguish two essentially different cases of entangled dyadic paraproducts.
We call $\Theta$ \emph{non-cancellative} if there exists an edge in $G$ that is incident to all vertices from
$\{x_i : i\in S\} \cup \{y_j : j\in T\}$. Otherwise we say that $\Theta$ is \emph{cancellative}.
In order to clarify this concept let us observe that cancellative paraproducts $\Theta$ correspond to the cases
\begin{itemize}
\item[(C1)] $\max\{|S|,|T|\}\geq 2$,
\item[(C2)] $S=\{i\}$, $T=\{j\}$, and $(i,j)\not\in E$,
\end{itemize}
while non-cancellative paraproducts appear in the cases
\begin{itemize}
\item[(NC1)] $|S|+|T|=1$,
\item[(NC2)] $S=\{i\}$, $T=\{j\}$, and $(i,j)\in E$.
\end{itemize}
Recall that \,$S=T=\emptyset$\, is not possible because then all $\mathbf{a}^{(i)}$ and $\mathbf{b}^{(j)}$ would be $\mathbf{1}$.

\subsection{Cancellative paraproducts}
Types (C1) and (C2) will be resolved by the following proposition.

\begin{proposition}\label{propositioncancellative}
Suppose that $\Theta$ is a cancellative entangled dyadic paraproduct defined by \emph{(\ref{eqparaproductsdef})}
with coefficients \emph{(\ref{eqcoefficientsdef})}.
\begin{itemize}
\item[(a)]
If Condition \emph{(\ref{eqt1wbpcondition})} holds, then the coefficients $\lambda=(\lambda_{Q})_{Q\in\mathcal{C}}$ satisfy
$$ \|\lambda\|_{\ell^{\infty}} := \sup_{Q\in\mathcal{C}}|\lambda_Q| \,\lesssim 1 . $$
\item[(b)]
Each localized form $\Theta_{\mathcal{T}}$ for a finite convex tree $\mathcal{T}$ satisfies
$$ \Theta_{\mathcal{T}}\big((F_{i,j})_{(i,j)\in E}\big) \,\lesssim\, \|\lambda\|_{\ell^{\infty}} |Q_\mathcal{T}|\!
\prod_{(i,j)\in E} \max_{Q\in\mathcal{T}\cup\mathcal{L}(\mathcal{T})}\!\big[F_{i,j}^{d_{i,j}}\big]_{Q}^{1/d_{i,j}} . $$
\end{itemize}
\end{proposition}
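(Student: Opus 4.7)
For part (a), the plan is to extract $\lambda_{Q}$ by testing $\Lambda_{E}$ against cleverly chosen tensor products. In each cancellative case, I would choose $F_{i,j}=f_{i,j}\otimes g_{i,j}$ with $f_{i,j}\in\{\mathbf{1}_{I},\mathbf{h}_{I}\}$ and $g_{i,j}\in\{\mathbf{1}_{J},\mathbf{h}_{J}\}$ so that $\prod_{j:(i,j)\in E}f_{i,j}(x_{i})=\mathbf{a}^{(i)}_{I}(x_{i})$ for every $i$ and $\prod_{i:(i,j)\in E}g_{i,j}(y_{j})=\mathbf{b}^{(j)}_{J}(y_{j})$ for every $j$. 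Cancellativity is exactly the combinatorial condition that permits this factorization: in case (C1) at least two distinct edges are available to absorb the Haars of the vertices in $S\cup T$, while in case (C2) the Haars for $x_{i}$ and $y_{j}$ can be placed on distinct edges because $(i,j)\notin E$. With such $F_{i,j}$, the product $\prod_{(i,j)\in E}F_{i,j}(x_{i},y_{j})$ reproduces the elementary tensor in (\ref{eqcoefficientsdef}), so $\Lambda_{E}((F_{i,j}))=|Q|\lambda_{Q}$.

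Next I expand each $F_{i,j}$ as a signed combination of indicators of children of $Q$ via $\mathbf{h}_{I}=\mathbf{1}_{I_{\mathrm{left}}}-\mathbf{1}_{I_{\mathrm{right}}}$ and $\mathbf{1}_{I}=\mathbf{1}_{I_{\mathrm{left}}}+\mathbf{1}_{I_{\mathrm{right}}}$ (symmetrically for $J$), and expand multilinearly to write $\Lambda_{E}((F_{i,j}))$ as a finite signed sum of $\Lambda_{E}((\mathbf{1}_{Q'_{i,j}}))$ with $Q'_{i,j}$ children of $Q$. The compatibility constraints at each shared vertex collapse the nonzero summands to configurations parameterized by a single choice of half $I_{\star}^{(i)}\subseteq I$ for every $i$ and $J_{\star}^{(j)}\subseteq J$ for every $j$. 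When all these halves coincide, every $Q'_{i,j}$ equals a common child $Q_{\star}$ of $Q$, and condition (\ref{eqt1wbpcondition}) yields $|\Lambda_{E}((\mathbf{1}_{Q_{\star}}))|\lesssim|Q_{\star}|\leq|Q|$. Otherwise two variables of the same type sit in opposite halves, so the integration cube is disjoint from $D$; by the perfect dyadic property $K$ is constant there, and evaluating (\ref{eqczkernelest}) at a corner where the pairwise separations sum to $\asymp|I|$ forces $|K|\lesssim|I|^{2-m-n}$ throughout the cube, whose measure is $\asymp|I|^{m+n}$, returning $\lesssim|Q|$. Summing the $O(1)$ configurations gives $|\lambda_{Q}|\lesssim 1$.

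For part (b), the single-tree estimate, my plan is to follow the pattern of \cite[Section 5]{Kov1}: at each $Q\in\mathcal{T}$, rewrite the Haar averages inside $\mathcal{A}_{Q}((F_{i,j}))$ as martingale differences on the sub-cubes of $Q$, and apply Cauchy-Schwarz (or H\"older, when the $d_{i,j}$ are not uniform) to pair Haar factors of distinct edges within each connected component $G_{\ell}$; cancellativity is precisely what guarantees at least two such factors are available to pair. Summing over $Q\in\mathcal{T}$ turns the resulting expression into a multilinear Carleson embedding, whose Carleson constant is $\|\lambda\|_{\ell^{\infty}}|Q_{\mathcal{T}}|$, combined with $L^{d_{i,j}}$-type maximal bounds on the remaining factors. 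The choice (\ref{eqchoiceofds1}) is then forced by counting how many edges in each component are absorbed through H\"older, with the additional $+1$ in the non-complete-bipartite case accounting for an extra pairing needed to close the estimate.

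I expect the principal difficulty to lie in the bookkeeping for (b): organizing the Cauchy-Schwarz and H\"older pairings so that every edge of every $G_{\ell}$ is dominated by a quantity of the form $[F_{i,j}^{d_{i,j}}]_{Q}^{1/d_{i,j}}$ with exponents exactly matching (\ref{eqchoiceofds1}), and invoking the Bellman-function ingredient of \cite{Kov1} at a level of generality sufficient to handle asymmetric components where $|\mathcal{X}_{\ell}|\neq|\mathcal{Y}_{\ell}|$. In (a), the only delicate point is a finite combinatorial check that in both (C1) and (C2) the Haars can indeed be routed onto distinct edges, which the case split at the end of the previous subsection already anticipates.
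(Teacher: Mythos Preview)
For part (a) your argument is correct but takes a slightly different route from the paper. The paper does not pass through $\Lambda_E$ at all: it expands $\mathbf{a}^{(i)}_I=\mathbf{1}_{I_{\mathrm{left}}}\pm\mathbf{1}_{I_{\mathrm{right}}}$ (and likewise for $\mathbf{b}^{(j)}_J$) directly in the formula (\ref{eqcoefficientsdef}) for $\lambda_Q$, landing on the same $2^{m+n}$-term sum indexed by $(I^{(i)},J^{(j)})$. Your detour via testing $\Lambda_E$ on routed tensors is harmless but unnecessary, and note that the routing you describe is possible for \emph{every} paraproduct, not just the cancellative ones, so cancellativity plays no role in (a). For the off-diagonal child cubes you exploit the constancy of $K$ there and evaluate (\ref{eqczkernelest}) at a single well-separated point; this is neater than the paper's spherical-coordinate integration, which only uses the size bound (\ref{eqczkernelest}) and never invokes the perfect dyadic constancy. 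Both approaches are valid.

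For part (b) your plan is too schematic and misses the concrete reduction that does the work. The paper does not rerun a Bellman or Carleson-embedding argument; for class (C1) it simply quotes \cite[Proposition~3.1]{Kov1} as a black box, and the point of Proposition~\ref{propositioncancellative}(b) is to reduce class (C2) to class (C1). In (C2) the single Haar in $x_{i_0}$ and the single Haar in $y_{j_0}$ sit in orthogonal coordinate directions, so they cannot be ``paired'' by Cauchy--Schwarz in the way you suggest; instead one uses the elementary bound
$|\langle\,\cdot\,\rangle_{x_{i_0}}\langle\,\cdot\,\rangle_{y_{j_0}}|\le\tfrac12\langle\,\cdot\,\rangle_{x_{i_0}}^2+\tfrac12\langle\,\cdot\,\rangle_{y_{j_0}}^2$,
which replaces $\mathcal{A}_Q$ by two expressions each of type (C1), corresponding to graphs obtained by doubling one of the vertices $x_{i_0},y_{j_0}$ and deleting the other. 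One then invokes \cite{Kov1} on each piece; this doubling is precisely what forces the $+1$ in (\ref{eqchoiceofds1}) (not an ``extra pairing''). There is a further case split according to whether $x_{i_0},y_{j_0}$ lie in the same component, and a separate ad hoc argument when the relevant component has $\max\{|\mathcal{X}_\ell|,|\mathcal{Y}_\ell|\}\le 2$, none of which your outline anticipates. Finally, your ``multilinear Carleson embedding with Carleson constant $\|\lambda\|_{\ell^\infty}|Q_{\mathcal{T}}|$'' does not reflect what happens: one merely bounds $|\lambda_Q|\le\|\lambda\|_{\ell^\infty}$ termwise, and the remaining sum $\sum_{Q\in\mathcal{T}}|Q||\mathcal{A}_Q|$ is controlled by the telescoping/Bellman machinery of \cite{Kov1}, not by a Carleson condition.
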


\begin{proof}[Proof of Proposition \ref{propositioncancellative}]\rule{1mm}{0mm}

\ (a) \
Take an arbitrary dyadic square $I\times J\in\mathcal{C}$.
Let us begin by substituting $F_{i,j}=\mathbf{1}_{I}\otimes\mathbf{1}_{J}$ for each $(i,j)\in E$ into (\ref{eqentangledform}).
Since there are no isolated vertices in $G$, the characteristic function $\mathbf{1}_{I}(x_i)$ is attached to each variable $x_i$
(possibly several times) and the same holds for the variables $y_j$.
Thus, Equation (\ref{eqentangledform}) becomes
$$ \Lambda_{E}\big(\underbrace{\mathbf{1}_{I}\!\otimes\!\mathbf{1}_{J},\ldots,\mathbf{1}_{I}\!\otimes\!\mathbf{1}_{J}}_{|E|}\big)
= \big\langle K \,,\, \underbrace{\mathbf{1}_{I}\!\otimes\!\ldots\!\otimes\!\mathbf{1}_{I}}_{m}\otimes
\underbrace{\mathbf{1}_{J}\!\otimes\!\ldots\!\otimes\!\mathbf{1}_{J}}_{n} \big\rangle_{\mathrm{L}^{2}(\mathbb{R}^{m+n})} , $$
so Inequality (\ref{eqt1wbpcondition}) gives
\begin{equation}\label{eqauxkernelest}
\big|\big\langle K \,,\, \mathbf{1}_{I}\!\otimes\!\ldots\!\otimes\!\mathbf{1}_{I}\!\otimes\!
\mathbf{1}_{J}\!\otimes\!\ldots\!\otimes\!\mathbf{1}_{J} \big\rangle_{\mathrm{L}^{2}(\mathbb{R}^{m+n})}\big| \lesssim |I\times J| .
\end{equation}

We can split each of \,$\mathbf{a}^{(1)}_{I},\ldots,\mathbf{a}^{(m)}_{I}$ as $\mathbf{1}_{I_\mathrm{left}}\pm\mathbf{1}_{I_\mathrm{right}}$
and each of \,$\mathbf{b}^{(1)}_{J},\ldots,\mathbf{b}^{(n)}_{J}$ as $\mathbf{1}_{J_\mathrm{left}}\pm\mathbf{1}_{J_\mathrm{right}}$.
That way we can estimate $|\lambda_{I\times J}|$ by
\begin{equation}\label{eqauxcoeffest}
|I\times J|^{-1}\!\!\!\sum_{\substack{I^{(1)},\ldots,I^{(m)}\in\{I_\mathrm{left},I_\mathrm{right}\} \\
J^{(1)},\ldots,J^{(n)}\in\{J_\mathrm{left},J_\mathrm{right}\}}}\!\!
\big|\big\langle K \,,\, \mathbf{1}_{I^{(1)}}\!\otimes\!\ldots\!\otimes\!\mathbf{1}_{I^{(m)}}\!\otimes\!
\mathbf{1}_{J^{(1)}}\!\otimes\!\ldots\!\otimes\!\mathbf{1}_{J^{(n)}} \big\rangle_{\mathrm{L}^{2}(\mathbb{R}^{m+n})}\big| .
\end{equation}
The terms with $I^{(1)}\!=\!\ldots\!=\!I^{(m)}$, $J^{(1)}\!=\!\ldots\!=\!J^{(n)}$
can be controlled using (\ref{eqauxkernelest}) applied to each of the four subsquares
$$ I_\mathrm{left}\times J_\mathrm{left}, \ \, I_\mathrm{left}\times J_\mathrm{right}, \ \,
I_\mathrm{right}\times J_\mathrm{left}, \ \, I_\mathrm{right}\times J_\mathrm{right} $$
to yield
$$ |I\times J|^{-1}\!\!\sum_{\substack{I'\in\{I_\mathrm{left},I_\mathrm{right}\}\\ J'\in\{J_\mathrm{left},J_\mathrm{right}\}}}\!\!
\big|\big\langle K \,,\, \mathbf{1}_{I'}\!\otimes\!\ldots\!\otimes\!\mathbf{1}_{I'}\!\otimes\!
\mathbf{1}_{J'}\!\otimes\!\ldots\!\otimes\!\mathbf{1}_{J'} \big\rangle_{\mathrm{L}^{2}(\mathbb{R}^{m+n})}\big| \,\lesssim 1 . $$
Each of the $2^{m+n}-4$ remaining terms in (\ref{eqauxcoeffest}) will be estimated individually.
Without loss of generality suppose \,$m\geq 2$,\, $I^{(1)}=I_\mathrm{left}$,\, $I^{(2)}=I_\mathrm{right}$\,
and denote the left endpoint of $I_\mathrm{right}$ by $x_0$.
If $x_1\in I_\mathrm{left}$, $x_2\in I_\mathrm{right}$, $x_3,\ldots,x_m\in I$, $y_1,\ldots,y_n\in J$, then
\begin{align*}
|x_1-x_2| & = |x_1-x_0| + |x_2-x_0| \\
|x_i-x_1| + |x_i-x_2| & \geq |x_i-x_0|, \quad i=3,\ldots,m
\end{align*}
and consequently also
\begin{align*}
& \sum_{1\leq i_1<i_2\leq m}\!\!|x_{i_1}\!-x_{i_2}| + \sum_{1\leq j_1<j_2\leq n}\!\!|y_{j_1}\!-y_{j_2}| \\
& \geq \,\sum_{i=1}^{m}|x_i-x_0| + \sum_{j=1}^{n-1}|y_j-y_n| \,\geq \Big(\sum_{i=1}^{m}(x_i-x_0)^2 + \sum_{j=1}^{n-1}(y_j-y_n)^2\Big)^{1/2} .
\end{align*}
Passing to the spherical coordinates in $\mathbb{R}^{m+n-1}$ and using the kernel size estimate (\ref{eqczkernelest}) we obtain
\begin{align*}
& |I\times J|^{-1}\big|\big\langle K \,,\, \mathbf{1}_{I_\mathrm{left}}\!\otimes\!\mathbf{1}_{I_\mathrm{right}}\!\otimes\!\mathbf{1}_{I^{(3)}}
\!\otimes\ldots\!\otimes\!\mathbf{1}_{I^{(m)}}\!\otimes\!
\mathbf{1}_{J^{(1)}}\!\otimes\!\ldots\!\otimes\!\mathbf{1}_{J^{(n)}} \big\rangle_{\mathrm{L}^{2}(\mathbb{R}^{m+n})}\big| \\
& \leq |I|^{-2} \int_{J} \Big(\int_{\mathrm{Ball}((x_0,y_n),(m+n)|I|)}
|K(x_1,\ldots,x_m,y_1,\ldots,y_{n-1},y_n)| \,dx_1\ldots dx_m dy_1\ldots dy_{n-1}\Big) dy_n \\
& \leq |I|^{-2} \int_{J} \Big( \int_{0}^{(m+n)|I|} r^{2-m-n}\, r^{m+n-2} dr \Big) dy_n \ \lesssim \ 1 .
\end{align*}
Thus, $|\lambda_{I\times J}|\lesssim 1$.
The implicit constant certainly depends on $m,n$ and the constants from (\ref{eqczkernelest}),\,(\ref{eqt1wbpcondition}),
but these dependencies are understood.

\ (b) \
We begin by estimating $|\lambda_Q|\leq\|\lambda\|_{\ell^{\infty}}$, so we can indeed normalize $\|\lambda\|_{\ell^{\infty}}=1$
and then abandon the coefficients by estimating them from above by $1$.
Since the expression $\mathcal{A}_Q$ is scale-invariant,
one can even re-scale the tree $\mathcal{T}$ to achieve $|Q_\mathcal{T}|=1$ and then (using homogeneity) also normalize the functions by
\begin{equation}\label{eqnormalization}
\max_{Q\in\mathcal{T}\cup\mathcal{L}(\mathcal{T})}\!\big[F_{i,j}^{d_{i,j}}\big]_{Q}^{1/d_{i,j}} = 1
\end{equation}
for each $(i,j)\in E$.
Let us realize the structural splitting with respect to the graph components.
We denote
$$ \mathcal{A}_{I\times J}^{(\ell)} := \bigg[ \bigg\langle \prod_{(i,j)\in E_{\ell}} \!\! F_{i,j}(x_i,y_j)
\bigg\rangle_{\substack{x_i\in I\mathrm{\,for\,each\,}i\in S\cap\mathcal{X}_{\ell}\\ y_j\in J\mathrm{\,for\,each\,}j\in T\cap\mathcal{Y}_{\ell}}}
\,\bigg]_{\substack{x_i\in I\mathrm{\,for\,each\,}i\in S^c\cap\mathcal{X}_{\ell}\\ y_j\in J\mathrm{\,for\,each\,}j\in T^c\cap\mathcal{Y}_{\ell}}} , $$
so that $\mathcal{A}_{Q} = \prod_{\ell=1}^{k} \mathcal{A}_{Q}^{(\ell)}$ for each dyadic square $Q=I\times J$.

The paper \cite{Kov1} was devoted exactly to entangled dyadic paraproducts that fall into Class (C1),
although this terminology was not used there.
In Case (C1) Estimate (\ref{eqsingletreeest}) is covered precisely by Proposition 3.1 in \cite{Kov1}.
However, it is important to notice that there is even an improvement over our claim: it establishes (\ref{eqsingletreeest})
when $d_{i,j}$ is simply replaced with $\max\{|\mathcal{X}_\ell|,|\mathcal{Y}_\ell|\}$ for every $(i,j)\in E_\ell$,
i.e.\@ there is no need to add $1$ to it.
It is worth observing that quantities $[F_{i,j}^d]_{Q}^{1/d}$ are increasing in $d$ by Jensen's inequality.

In what follows we show how to handle entangled paraproducts from Class (C2), i.e.\@ those that have $S=\{i_0\}$, $T=\{j_0\}$, $(i_0,j_0)\not\in E$.
Such $i_0$ and $j_0$ will be fixed for the whole succeeding discussion. We further split into two subcases.

\emph{Case 1.} The vertices $x_{i_0}$ and $y_{j_0}$ of $G$ belong to the same component.

Without loss of generality we can assume $i_0\in\mathcal{X}_1$, $j_0\in\mathcal{Y}_1$.
After Normalization (\ref{eqnormalization}) and multiple applications of H\"{o}lder's inequality or Lemma 3.2 from \cite{Kov1},
we can bound $|\mathcal{A}_{Q}^{(\ell)}|\leq 1$ for each $Q\in\mathcal{T}$ and $\ell\neq 1$.
This in turn gives $|\mathcal{A}_{Q}|\leq |\mathcal{A}_{Q}^{(1)}|$, so we can work with the component $G_1$ only.
Compare this reduction with \cite[Subsection 3.2]{Kov1}.

Exploiting the fact $(i_0,j_0)\not\in E$ we can rewrite $\mathcal{A}_{Q}^{(1)}$ as
$$ \mathcal{A}_{Q}^{(1)} = \bigg[ \Big\langle \prod_{j \,:\, (i_0,j)\in E_1} \!\!\! F_{i_0,j}(x_{i_0},y_j) \Big\rangle_{x_{i_0}\in I}
\Big\langle \prod_{i \,:\, (i,j_0)\in E_1} \!\!\! F_{i,j_0}(x_i,y_{j_0}) \Big\rangle_{y_{j_0}\in J}
\prod_{\substack{(i,j)\in E_1 \\ i\neq i_0,\, j\neq j_0}} \!\! F_{i,j}(x_i,y_j)
\bigg]_{\substack{x_i\in I\mathrm{\,for\,}i\in\mathcal{X}_1\setminus\{i_0\}\\ y_j\in J\mathrm{\,for\,}j\in\mathcal{Y}_1\setminus\{j_0\}}} $$
and then estimate $|\mathcal{A}_{Q}^{(1)}|\leq\frac{1}{2}\widetilde{\mathcal{A}}_{Q}+\frac{1}{2}\bar{\mathcal{A}}_{Q}$, where
\begin{align*}
\widetilde{\mathcal{A}}_{Q} & := \bigg[ \Big\langle \prod_{j \,:\, (i_0,j)\in E_1} \!\!\! F_{i_0,j}(x_{i_0},y_j) \Big\rangle_{x_{i_0}\in I}^2
\prod_{\substack{(i,j)\in E_1 \\ i\neq i_0,\, j\neq j_0}} \!\! F_{i,j}(x_i,y_j)
\bigg]_{\substack{x_i\in I\mathrm{\,for\,}i\in\mathcal{X}_1\setminus\{i_0\}\\ y_j\in J\mathrm{\,for\,}j\in\mathcal{Y}_1\setminus\{j_0\}}} , \\
\bar{\mathcal{A}}_{Q} & := \bigg[ \Big\langle \prod_{i \,:\, (i,j_0)\in E_1} \!\!\! F_{i,j_0}(x_i,y_{j_0}) \Big\rangle_{y_{j_0}\in J}^2
\prod_{\substack{(i,j)\in E_1 \\ i\neq i_0,\, j\neq j_0}} \!\! F_{i,j}(x_i,y_j)
\bigg]_{\substack{x_i\in I\mathrm{\,for\,}i\in\mathcal{X}_1\setminus\{i_0\}\\ y_j\in J\mathrm{\,for\,}j\in\mathcal{Y}_1\setminus\{j_0\}}} .
\end{align*}
Consider the new entangled paraproducts $\widetilde{\Theta}$ and $\bar{\Theta}$ with local versions defined by
$$ \widetilde{\Theta}_{\mathcal{T}} := \sum_{Q\in\mathcal{T}} |Q| \,\widetilde{\mathcal{A}}_{Q}
\quad\textrm{and}\quad \bar{\Theta}_{\mathcal{T}} := \sum_{Q\in\mathcal{T}} |Q| \,\bar{\mathcal{A}}_{Q} . $$
Observe that $\widetilde{\Theta}$ corresponds to the graph obtained from $G_1$ by ``doubling'' the vertex $x_{i_0}$ and deleting the vertex $y_{j_0}$.
This way we obtain a (connected or disconnected) graph with at most $\max\{|\mathcal{X}_1|\!+\!1,|\mathcal{Y}_1|\!-\!1\}$ vertices in each bipartition.
It belongs to the Class (C1), so we can invoke \cite[Proposition 3.1]{Kov1} again.
Analoguously we proceed with $\bar{\Theta}_{\mathcal{T}}$, which leads us to
$$ \Theta_\mathcal{T}\big((F_{i,j})_{(i,j)\in E}\big)
\leq \frac{1}{2}\widetilde{\Theta}_{\mathcal{T}}\big((F_{i,j})_{(i,j)\in E}\big)
+ \frac{1}{2}\bar{\Theta}_{\mathcal{T}}\big((F_{i,j})_{(i,j)\in E}\big) \lesssim 1 . $$

Note that in \cite[Proposition 3.1]{Kov1} the thresholds $d_{i,j}$ are
defined to be $\max\{|\mathcal{X}_1|,|\mathcal{Y}_1|\}$, but since we
apply this proposition to a modified connected component with component
size changed by $1$, we only obtain the desired estimate with the
threshold $\max\{|\mathcal{X}_1|,|\mathcal{Y}_1|\}+1$ with respect to the
original graph.
However, this is all we claimed when $\max\{|\mathcal{X}_1|,|\mathcal{Y}_1|\}\geq 3$ and the component in question is not a complete graph, and thus
proves the desired bound for $\Theta_\mathcal{T}$ in this case.

Under the assumption of Case 1 the graph $G_1$ is never complete.
It remains to consider the case when $G_1$  satisfies
$\max\{|\mathcal{X}_1|,|\mathcal{Y}_1|\}\leq 2$. After relabeling if necessary,
this implies under the assumption of Case 1 that
$E_1 = \{(1,1),(1,2),(2,1)\}$, $S=\{2\}$, $T=\{2\}$. This graph was
already depicted in the left half of Figure \ref{figureexamples}.
Using the Cauchy-Schwarz inequality, $Q=I\times J\in\mathcal{T}$, and $[F_{1,1}^2]_{Q}^{1/2}\leq 1$,
{\allowdisplaybreaks\begin{align*}
\big|\mathcal{A}_{Q}^{(1)}\big| & = \big|\big[\langle F_{2,1}(x_2,y_1)\rangle_{x_2\in I}\,
\langle F_{1,2}(x_1,y_2)\rangle_{y_2\in J}\, F_{1,1}(x_1,y_1)\big]_{x_1\in I,\,y_1\in J}\big| \\
& \leq \big[\langle F_{2,1}(x_2,y_1)\rangle_{x_2\in I}^2\,\langle F_{1,2}(x_1,y_2)\rangle_{y_2\in J}^2\big]_{x_1\in I,\,y_1\in J}^{1/2}
\big[F_{1,1}(x_1,y_1)^2\big]_{x_1\in I,\,y_1\in J}^{1/2} \\
& = \big[\langle F_{2,1}(x_2,y_1)\rangle_{x_2\in I}^2\big]_{y_1\in J}^{1/2}
\big[\langle F_{1,2}(x_1,y_2)\rangle_{y_2\in J}^2\big]_{x_1\in I}^{1/2}
\big[F_{1,1}(x_1,y_1)^2\big]_{x_1\in I,\,y_1\in J}^{1/2} \\
& \leq \frac{1}{2}\big[\langle F_{2,1}(x_2,y_1)\rangle_{x_2\in I}^2\big]_{y_1\in J}
+\frac{1}{2}\big[\langle F_{1,2}(x_1,y_2)\rangle_{y_2\in J}^2\big]_{x_1\in I} .
\end{align*}}
This time we ended up with paraproducts
$$ \widetilde{\Theta}_{\mathcal{T}} := \sum_{I\times J\in\mathcal{T}} |I\times J| \big[\langle F_{2,1}(x,y)\rangle_{x\in I}^2\big]_{y\in J} ,
\quad \bar{\Theta}_{\mathcal{T}} := \sum_{I\times J\in\mathcal{T}} |I\times J| \big[\langle F_{1,2}(x,y)\rangle_{y\in J}^2\big]_{x\in I} , $$
corresponding to two graphs on only three vertices.
They are both in the Class (C1), which completes the discussion of Case 1.

\emph{Case 2.} The vertices $x_{i_0}$ and $y_{j_0}$ of $G$ belong to different components.

Without loss of generality we can assume that $i_0\in\mathcal{X}_1$ and $j_0\in\mathcal{Y}_2$.
By \cite[Lemma 3.2]{Kov1} again we can estimate
$$ |\mathcal{A}_{Q}| = \prod_{\ell=1}^{k} |\mathcal{A}_{Q}^{(\ell)}| \leq |\mathcal{A}_{Q}^{(1)}| |\mathcal{A}_{Q}^{(2)}|
\leq \frac{1}{2}\big(\mathcal{A}_{Q}^{(1)}\big)^2 + \frac{1}{2}\big(\mathcal{A}_{Q}^{(2)}\big)^2 $$
for each $Q\in\mathcal{T}$.
This in turn immediately reduces Bound (\ref{eqsingletreeest}) for $\Theta_\mathcal{T}$ to the same bound for
$$ \widetilde{\Theta}_{\mathcal{T}} := \sum_{Q\in\mathcal{T}} |Q| \big(\mathcal{A}_{Q}^{(1)}\big)^2
\quad\textrm{and}\quad \bar{\Theta}_{\mathcal{T}} := \sum_{Q\in\mathcal{T}} |Q| \big(\mathcal{A}_{Q}^{(2)}\big)^2 . $$
Both of these are indeed localized versions of entangled paraproducts falling into the Class (C1).
More precisely, $\widetilde{\Theta}$ corresponds to a graph with two components, each being a copy of $G_1$,
so Proposition 3.1  from \cite{Kov1} can be applied once again and it yields (\ref{eqsingletreeest})
with all exponents equal to $\max\{|\mathcal{X}_1|,|\mathcal{Y}_1|\}$.
Similarly we deal with $\bar{\Theta}$. Note that we have proved a sharper variant of (\ref{eqsingletreeest}) again, i.e.\@ the one when
$d_{i,j}$ is replaced with $\max\{|\mathcal{X}_\ell|,|\mathcal{Y}_\ell|\}$ for $(i,j)\in E_\ell$.
\end{proof}

\subsection{Non-cancellative paraproducts}
Now we handle paraproduct types (NC1) and (NC2).

\begin{proposition}\label{propositionnoncancellative}
Suppose that $\Theta$ is a non-cancellative entangled dyadic paraproduct defined by \emph{(\ref{eqparaproductsdef})}
with coefficients \emph{(\ref{eqcoefficientsdef})}.
\begin{itemize}
\item[(a)]
If Conditions \emph{(\ref{eqt1condition})} hold, then the coefficients $\lambda=(\lambda_{Q})_{Q\in\mathcal{C}}$ satisfy
$$ \|\lambda\|_{\mathrm{bmo}} :=
\sup_{Q_0\in\mathcal{C}}\Big(\frac{1}{|Q_0|}\sum_{Q\in\mathcal{C}\,:\,Q\subseteq Q_0}\!|Q||\lambda_Q|^2\Big)^{1/2} \lesssim 1 . $$
\item[(b)]
Each localized form $\Theta_{\mathcal{T}}$ for a finite convex tree $\mathcal{T}$ satisfies
$$ \Theta_{\mathcal{T}}\big((F_{i,j})_{(i,j)\in E}\big) \,\lesssim\, \|\lambda\|_{\mathrm{bmo}} |Q_\mathcal{T}|\!
\prod_{(i,j)\in E} \max_{Q\in\mathcal{T}\cup\mathcal{L}(\mathcal{T})}\!\big[F_{i,j}^{d_{i,j}}\big]_{Q}^{1/d_{i,j}} . $$
\end{itemize}
\end{proposition}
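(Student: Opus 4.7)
The plan separates the two parts.

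For part (a), I will fix an edge $(u_0,v_0)\in E$ witnessing non-cancellativity: in (NC2) this is the given edge $(u,v)$, while in (NC1) it is any edge incident to the unique Haar variable. The goal is to establish the identity
\begin{equation*}
|I\times J|\,\lambda_{I\times J}
\;=\; \big\langle T_{u_0,v_0}(\mathbf{1}_{\mathbb{R}^2},\ldots,\mathbf{1}_{\mathbb{R}^2}),\,\mathbf{a}_I\otimes\mathbf{b}_J\big\rangle_{\mathrm{L}^{2}(\mathbb{R}^2)},
\end{equation*}
where $\mathbf{a}_I\otimes\mathbf{b}_J$ equals $\mathbf{h}_I\otimes\mathbf{h}_J$, $\mathbf{h}_I\otimes\mathbf{1}_J$, or $\mathbf{1}_I\otimes\mathbf{h}_J$ according to the positions of the Haar factors. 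To prove this, I expand the right-hand side as an integral of $K$ over $\mathbb{R}^{m+n}$ and partition the domain into $(m+n)$-dimensional dyadic cubes of side $|I|$. The perfect dyadic hypothesis forces $K$ to be constant on any such cube whose $x$-projections are not all equal to $I$ or whose $y$-projections are not all equal to $J$, and the mean-zero property of $\mathbf{h}_I$ at slot $u_0$ (or $\mathbf{h}_J$ at slot $v_0$) annihilates these contributions; only the main cube $I^m\times J^n$ survives, reproducing (\ref{eqcoefficientsdef}). The desired $\mathrm{bmo}$ bound then reduces to the familiar Carleson/Haar characterization of $\mathrm{BMO}(\mathbb{R}^2)$ applied to the hypothesis (\ref{eqt1condition}) for the edge $(u_0,v_0)$.

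For part (b), I normalize $\|\lambda\|_{\mathrm{bmo}}=|Q_\mathcal{T}|=\max_Q[F_{i,j}^{d_{i,j}}]_Q^{1/d_{i,j}}=1$ and apply Cauchy--Schwarz in $Q$:
\begin{equation*}
\Theta_\mathcal{T}\big((F_{i,j})_{(i,j)\in E}\big)
\;\leq\;\Big(\sum_{Q\in\mathcal{T}}|Q||\lambda_Q|^2\Big)^{1/2}\Big(\sum_{Q\in\mathcal{T}}|Q|\,\mathcal{A}_Q^2\Big)^{1/2}.
\end{equation*}
The first factor is bounded by $\|\lambda\|_{\mathrm{bmo}}|Q_\mathcal{T}|^{1/2}=1$. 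For the second, I view $\mathcal{A}_Q^2$ as the elementary term of a \emph{doubled} entangled paraproduct: introducing a primed copy $(x_i',y_j')$ of each integration variable yields a new paraproduct on the disjoint union $G\sqcup G'$ of two copies of $G$, with every $F_{i,j}$ appearing twice and the Haar vertices split between the two components. Since Haar vertices lying in distinct components cannot all be incident to a common edge, this doubled paraproduct is cancellative of type (C1), with trivial constant coefficients $\equiv 1$. Applying \cite[Proposition 3.1]{Kov1}, its per-component threshold is the unaugmented $\max\{|\mathcal{X}_\ell|,|\mathcal{Y}_\ell|\}$, which is always at most the $d_{i,j}$ of (\ref{eqchoiceofds1}); Jensen's inequality then converts the resulting estimate into $\sum_{Q\in\mathcal{T}}|Q|\,\mathcal{A}_Q^2\lesssim 1$, and combining the two factors completes (b).

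The main obstacle is bookkeeping. In part (a) one must verify that the mean-zero cancellation of the designated Haar truly annihilates every off-diagonal contribution in the expanded integral, which is precisely where the choice of $(u_0,v_0)$ and the perfect dyadic structure of $K$ are jointly essential. In part (b) one must confirm that the thresholds produced by \cite[Proposition 3.1]{Kov1} on the doubled graph are compatible with (\ref{eqchoiceofds1}); the ``$+1$'' appearing in that definition in the non-complete, max-size $\geq 3$ case provides exactly the required slack, and is the underlying reason that non-cancellative paraproducts force a stricter exponent range than cancellative ones.
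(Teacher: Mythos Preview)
Your proof is correct and matches the paper's approach: in (a) you recover the coefficient as a Haar-type pairing with $T_{u_0,v_0}(\mathbf{1},\ldots,\mathbf{1})$ (the paper does this by substituting $F_{i,j}=\mathbf{1}_{\mathbb{R}^2}$ into the decomposition (\ref{eqdecompparaprod}), but your direct cube-partition argument is equivalent), and in (b) you apply Cauchy--Schwarz and treat $\sum_Q |Q|\,\mathcal{A}_Q^2$ as a (C1) paraproduct on the doubled graph, exactly as the paper does.

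One correction to your closing commentary: the ``$+1$'' in (\ref{eqchoiceofds1}) is \emph{not} forced by the non-cancellative case. The doubled graph has the same component sizes as $G$, so part (b) needs only the unaugmented threshold $\max\{|\mathcal{X}_\ell|,|\mathcal{Y}_\ell|\}$; the ``$+1$'' is actually dictated by the \emph{cancellative} paraproducts of type (C2), Case~1 in Proposition~\ref{propositioncancellative}, where doubling a single vertex enlarges a component by one.
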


\begin{proof}[Proof of Proposition \ref{propositionnoncancellative}]\rule{1mm}{0mm}

\ (a) \
Let us begin by fixing $(u,v)\in E$ and computing \,$T_{u,v}(\mathbf{1}_{\mathbb{R}^2},\ldots,\mathbf{1}_{\mathbb{R}^2})$.
We achieve this by substituting $F_{i,j}=\mathbf{1}_{\mathbb{R}^2}$ for $(i,j)\neq (u,v)$ in
$\Lambda_{E}((F_{i,j})_{(i,j)\in E})$ and using Representation (\ref{eqdecompparaprod}).
In this case
\begin{align*}
& \Theta_{E}^{\mathbf{a}^{(1)},\ldots,\mathbf{a}^{(m)},\mathbf{b}^{(1)},\ldots,\mathbf{b}^{(n)}}\big((F_{i,j})_{(i,j)\in E}\big) \\
& = \sum_{I\times J\in \mathcal{C}} \lambda_{I\times J}^{\mathbf{a}^{(1)},\ldots,\mathbf{a}^{(m)},\mathbf{b}^{(1)},\ldots,\mathbf{b}^{(n)}} |I|^{2-m-n}
\Big(\prod_{i\neq u}\int_{\mathbb{R}}\mathbf{a}_{I}^{(i)}\Big) \Big(\prod_{j\neq v}\int_{\mathbb{R}}\mathbf{b}_{J}^{(j)}\Big)
\int_{\mathbb{R}^2} F_{u,v}(x,y) \mathbf{a}_{I}^{(u)}(x) \mathbf{b}_{J}^{(v)}(y) dx dy ,
\end{align*}
which can only be non-zero when $S\subseteq\{u\}$ and $T\subseteq\{v\}$.
Let us rather denote the coefficients by $\lambda_{I\times J}^{S,T}$, so that
$$ \Lambda_E \big((F_{i,j})_{(i,j)\in E}\big)
= \!\sum_{I\times J\in \mathcal{C}} \int_{\mathbb{R}^2} \!F_{u,v}(x,y)
\Big( \lambda_{I\times J}^{\emptyset,\{v\}}\mathbf{1}_{I}(x)\mathbf{h}_{J}(y)
+ \lambda_{I\times J}^{\{u\},\emptyset}\mathbf{h}_{I}(x)\mathbf{1}_{J}(y)
+ \lambda_{I\times J}^{\{u\},\{v\}}\mathbf{h}_{I}(x)\mathbf{h}_{J}(y) \Big) dx dy . $$
Since $F_{u,v}$ can be chosen arbitrarily, using (\ref{eqduality}) we have obtained
$$ T_{u,v}(\mathbf{1}_{\mathbb{R}^2},\ldots,\mathbf{1}_{\mathbb{R}^2})
= \sum_{I\times J\in \mathcal{C}} \Big( \lambda_{I\times J}^{\emptyset,\{v\}}\,\mathbf{1}_{I}\otimes\mathbf{h}_{J}
+ \lambda_{I\times J}^{\{u\},\emptyset}\,\mathbf{h}_{I}\otimes\mathbf{1}_{J}
+ \lambda_{I\times J}^{\{u\},\{v\}}\,\mathbf{h}_{I}\otimes\mathbf{h}_{J} \Big) . $$
From (\ref{eqbmodef}) one easily concludes
\begin{equation}\label{eqbmoest}
\big\|T_{u,v}(\mathbf{1}_{\mathbb{R}^2},\ldots,\mathbf{1}_{\mathbb{R}^2})\big\|_{\mathrm{BMO}(\mathbb{R}^2)}
= \sup_{Q_0\in\mathcal{C}} \Big( \frac{1}{|Q_0|}\sum_{Q\in\mathcal{C}\,:\,Q\subseteq Q_0} \!|Q|\,
\big( \big|\lambda_{Q}^{\emptyset,\{v\}}\big|^2 \!+\! \big|\lambda_{Q}^{\{u\},\emptyset}\big|^2
\!+\! \big|\lambda_{Q}^{\{u\},\{v\}}\big|^2 \big) \Big)^{1/2} .
\end{equation}
Combining Condition (\ref{eqt1condition}) and Equality (\ref{eqbmoest}) we obtain
$$ \big\|\lambda^{\emptyset,\{v\}}\big\|_{\mathrm{bmo}},\, \big\|\lambda^{\{u\},\emptyset}\big\|_{\mathrm{bmo}},\,
\big\|\lambda^{\{u\},\{v\}}\big\|_{\mathrm{bmo}} \,\lesssim 1 . $$
It remains to observe that for each non-cancellative paraproduct $\Theta$ there exists $(u,v)\in E$
such that $S\subseteq\{u\}$ and $T\subseteq\{v\}$, simply by definition.
Therefore its coefficients satisfy the desired bound.

\ (b) \
Again we normalize to achieve $|Q_\mathcal{T}|=1$ and (\ref{eqnormalization}).
Proceed by applying the Cauchy-Schwarz inequality to get
$$ \Theta_\mathcal{T} \leq \Big(\sum_{Q\in\mathcal{T}} |Q| |\lambda_Q|^2\Big)^{1/2}
\Big(\sum_{Q\in\mathcal{T}} |Q| \big(\mathcal{A}_{Q}\big)^2\Big)^{1/2} . $$
On the one hand,
$$ \sum_{Q\in\mathcal{T}} |Q| |\lambda_Q|^2
\leq \!\sum_{Q\in\mathcal{C}\,:\,Q\subseteq Q_\mathcal{T}}\!\! |Q| |\lambda_Q|^2
\leq \|\lambda\|_\mathrm{bmo}^2 |Q_\mathcal{T}| = \|\lambda\|_\mathrm{bmo}^2 . $$
On the other hand,
$$ \widetilde{\Theta}_{\mathcal{T}} := \sum_{Q\in\mathcal{T}} |Q| \big(\mathcal{A}_{Q}\big)^2 $$
is an example of a local version of an entangled paraproduct falling into the Class (C1).
Its graph is obtained by simply doubling all connected components of $G$.
From Proposition 3.1 of \cite{Kov1} we get $\widetilde{\Theta}_{\mathcal{T}}\lesssim 1$,
so $\Theta_\mathcal{T}\lesssim\|\lambda\|_\mathrm{bmo}$, which completes the proof.
\end{proof}

Let us comment once again that Propositions \ref{propositioncancellative} and \ref{propositionnoncancellative} together
prove Estimate (\ref{eqsingletreeest}) for each entangled paraproduct $\Theta$, which in turn has to be combined
with a stopping time argument and Decomposition (\ref{eqdecompparaprod}) to establish Estimate (\ref{eqt1estimate}).

\section{Discussion of the exponent range and the exceptional cases}
\label{sectionexponents}

The purpose of this section is to ensure that the results of Theorems \ref{theoremmain} and \ref{theoremreformulated}
are not void for any non-trivial graph $G$, i.e.\@ whenever $m,n\geq 2$.
We have already established Part (a) of Theorem \ref{theoremmain} for ``most'' of the graphs and now we have to
identify and resolve the several exceptional cases.
Begin by observing that exponents $p_{i,j}$ such that $\sum_{(i,j)\in E}p_{i,j}^{-1}=1$ and $d_{i,j}<p_{i,j}<\infty$
will certainly exist as soon as (\ref{eqdscondition}) is satisfied.

\subsection{Non-emptiness of the range}
Each component $G_\ell$ has at least $|\mathcal{X}_\ell|+|\mathcal{Y}_\ell|-1$ edges, as otherwise it would not be a connected graph.
If $\max\{|\mathcal{X}_\ell|,|\mathcal{Y}_\ell|\}\leq 2$, then the component contributes to the sum in (\ref{eqdscondition}) with at least
$$ \frac{|\mathcal{X}_\ell|+|\mathcal{Y}_\ell|-1}{\max\{|\mathcal{X}_\ell|,|\mathcal{Y}_\ell|\}} \geq 1 , $$
while if $\max\{|\mathcal{X}_\ell|,|\mathcal{Y}_\ell|\}\geq 3$, then it adds at least
$$ \frac{|\mathcal{X}_\ell|+|\mathcal{Y}_\ell|-1}{\max\{|\mathcal{X}_\ell|,|\mathcal{Y}_\ell|\}+1} \geq \frac{3}{4} . $$
We conclude that (\ref{eqdscondition}) surely holds when $G$ has two or more components.

If $G$ is connected, then by $m,n\geq 2$ we have
$$ \frac{m+n-1}{\max\{m,n\}} > 1 \quad\textrm{and}\quad
\frac{m+n-1}{\max\{m,n\}+1} = 1 + \frac{\min\{m,n\}-2}{\max\{m,n\}+1} , $$
so (\ref{eqdscondition}) can fail only when $\min\{m,n\}=2$ and $\max\{m,n\}\geq 3$.
Moreover, if (\ref{eqdscondition}) is false, then
$$ 1 \leq \frac{|E|}{m+n-1} = \frac{|E|}{\max\{m,n\}+1} = \sum_{(i,j)\in E}\frac{1}{d_{i,j}}\leq 1 , $$
which gives $|E|=m+n-1$. We see that the number of edges in $G$ is by $1$ smaller than the number of vertices,
which implies that $G$ is a tree (i.e.\@ it has no cycles), by a well-known result from graph theory.

Therefore, the choice (\ref{eqchoiceofds1}) guarantees nontrivial $\mathrm{L}^p$ estimates in
Theorems \ref{theoremmain} and \ref{theoremreformulated} in
all but the following exceptional case:
$\min\{m,n\}=2$, $\max\{m,n\}\geq 3$, and $G$ is a tree.
By symmetry we suppose $m=2<n$ and then vertices $x_1$ and $x_2$ must have precisely one common neighbor among the $y_j$'s,
in order for the graph to stay connected and to avoid cycles.
By relabeling the vertices we can suppose that there is $r\in\{1,\ldots,n\}$, $r\geq\lceil\frac{n+1}{2}\rceil\geq 2$ such that
$$ E = \{(1,1),(1,2),\ldots,(1,r),(2,r),\ldots,(2,n)\} . $$

For $r\neq n$ we can modify the proof of Proposition \ref{propositioncancellative} to obtain the single tree estimate with
$d_{i,j}:=n$ for each $(i,j)\in E$.
The only cancellative paraproducts that are not covered by \cite[Proposition 3.1]{Kov1} are the ones from Class (C2).
Without loss of generality $S=\{1\}$ and $T=\{n\}$. Then we can control the paraproduct term as
\begin{align}
|\mathcal{A}_Q| & = \bigg| \bigg[ \Big\langle \prod_{j=1}^{r}F_{1,j}(x_1,y_j) \Big\rangle_{x_1\in I}
\big\langle F_{2,n}(x_2,y_n)\big\rangle_{y_n\in J}
\prod_{j=r}^{n-1} F_{2,j}(x_2,y_j) \bigg]_{x_2\in I,\ y_1,\ldots,y_{n-1}\in J} \bigg| \nonumber \\
& \leq \frac{1}{2}\bigg[ \Big\langle \prod_{j=1}^{r}F_{1,j}(x_1,y_j) \Big\rangle_{x_1\in I}^2
\prod_{j=r}^{n-1} F_{2,j}(x_2,y_j) \bigg]_{x_2\in I,\ y_1,\ldots,y_{n-1}\in J} \label{eqexception1} \\
& \quad + \frac{1}{2}\bigg[ \big\langle F_{2,n}(x_2,y_n)\big\rangle_{y_n\in J}^2 \prod_{j=r}^{n-1} F_{2,j}(x_2,y_j)
\bigg]_{x_2\in I,\ y_r,\ldots,y_{n-1}\in J} . \label{eqexception2}
\end{align}
Note that (\ref{eqexception1}) and (\ref{eqexception2}) lead to (possibly disconnected) graphs having at most
$$ \max\{3,\,n-1,\,n-r+2,\,r+1\} \leq n $$
vertices in each of their bipartition vertex-sets, so \cite[Proposition 3.1]{Kov1} can be applied once again.
On the other hand, the proof of Proposition \ref{propositionnoncancellative}, which handles non-cancellative paraproducts, can be left unchanged.
This in turn establishes (\ref{eqt1estimate}) whenever Conditions (\ref{eqt1wbpcondition}) and (\ref{eqt1condition}) are satisfied.

Finally, the only case we left out is when $r=n$.
An example of such graph is shown in Figure \ref{figureg3graph}.
This time we rather take
\begin{equation}\label{eqnewregion}
d_{1,j}:=n \,\textrm{ for } j=1,\ldots,n-1, \quad d_{1,n}:=2n-2, \quad d_{2,n}:=n .
\end{equation}
Cancellative paraproducts of Type (C1) are again resolved by \cite[Proposition 3.1]{Kov1}, even in a slightly larger range.
All paraproducts in Class (C2) are essentially the same, so choose $S=\{2\}$, $T=\{1\}$ and begin by estimating
\begin{align*}
|\mathcal{A}_Q| & = \bigg|\bigg[ \big\langle F_{1,1}(x_1,y_1)\big\rangle_{y_1\in J} \big\langle F_{2,n}(x_2,y_n)\big\rangle_{x_2\in I}
\prod_{j=2}^{n} F_{1,j}(x_1,y_j) \bigg]_{x_1\in I,\ y_2,\ldots,y_{n}\in J}\bigg| \\
& \leq \big[\langle F_{1,1}(x_1,y_1)\rangle_{y_1\in J}^2\big]_{x_1\in I}^{1/2}
\big[\langle F_{2,n}(x_2,y_n)\rangle_{x_2\in I}^2\big]_{y_n\in J}^{1/2}
\bigg[ \prod_{j=2}^{n} F_{1,j}(x_1,y_j)^2 \bigg]_{x_1\in I,\ y_2,\ldots,y_{n}\in J}^{1/2} .
\end{align*}
From (\ref{eqnormalization}) and
$$ \bigg[ \prod_{j=2}^{n} F_{1,j}(x_1,y_j)^2 \bigg]_{x_1\in I,\ y_2,\ldots,y_{n}\in J}^{1/2}
\leq \,\prod_{j=2}^{n} \big[ F_{1,j}(x_1,y_j)^{2n-2} \big]_{x_1\in I,\, y_j\in J}^{1/(2n-2)} $$
we obtain
$$ |\mathcal{A}_Q| \leq \frac{1}{2}\big[\langle F_{1,1}(x_1,y_1)\rangle_{y_1\in J}^2\big]_{x_1\in I}
+\frac{1}{2}\big[\langle F_{2,n}(x_2,y_n)\rangle_{x_2\in I}^2\big]_{y_n\in J}, $$
which this time yields the single tree estimate (\ref{eqsingletreeest}) with an unusual choice
$\tilde{d}_{1,j}=2n-2$ for each $1\leq j\leq n$ and $\tilde{d}_{2,n}=2$.
Consequently, Bound (\ref{eqt1estimate}) holds for $p_{i,j}>\tilde{d}_{i,j}$.
We are going to apply the fiberwise Calder\'{o}n-Zygmund decomposition of Bernicot \cite{Ber}
to expand the exponent region so that it contains the one determined by (\ref{eqnewregion}).
This is important because all entangled paraproducts corresponding to the same multilinear form
should satisfy the same $\mathrm{L}^p$ estimate in order for this estimate to hold for the form $\Lambda_E$ itself.
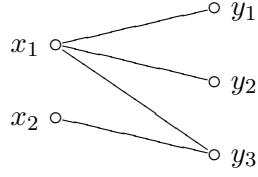
\begin{figure}
$$ \xymatrix @C=1.2mm @R=2.5mm @M=0mm @W=0mm {
& &&&&&&&&&&&&&&&& \circ & y_1 \\
x_1 & \circ \ar@{-}[urrrrrrrrrrrrrrrr] \ar@{-}[drrrrrrrrrrrrrrrr] \ar@{-}[dddrrrrrrrrrrrrrrrr] &&&&&&&&&&&&&&&& & \\
& &&&&&&&&&&&&&&&& \circ & y_2 \\
x_2 & \circ \ar@{-}[drrrrrrrrrrrrrrrr] &&&&&&&&&&&&&&&& & \\
& &&&&&&&&&&&&&&&& \circ & y_3
} $$
\caption{Example of an exceptional bipartite graph.}
\label{figureg3graph}
\end{figure}

We only give a short comment on Bernicot's argument, as we apply it in almost exactly the same way as in \cite[Section 5]{Kov2}.
By choosing $(u,v)=(1,n)$ and by duality, the problem becomes to extend estimates
$$ \|T_{1,n}(F_{2,n},F_{1,1},\ldots,F_{1,n-1})\|_{\mathrm{L}^{p}(\mathbb{R}^2)}
\lesssim \|F_{2,n}\|_{\mathrm{L}^{p_{2,n}}(\mathbb{R}^2)} \prod_{j=1}^{n-1}\|F_{1,j}\|_{\mathrm{L}^{p_{1,j}}(\mathbb{R}^2)} $$
for the operator
\begin{align*}
T_{1,n}(F_{2,n},F_{1,1},\ldots,F_{1,n-1})(x,y) & = \sum_{I\times J\in \mathcal{C}} \lambda_{I\times J} |I|^{-n}
\Big(\int_{\mathbb{R}}F_{2,n}(x',y)\mathbf{a}_{I}^{(2)}(x')dx'\Big) \\[-2mm]
& \qquad\qquad\prod_{j=1}^{n-1}\Big(\int_{\mathbb{R}}F_{1,j}(x,y')\mathbf{b}_{J}^{(j)}(y')dy'\Big) \,\mathbf{a}_{I}^{(1)}(x)\mathbf{b}_{J}^{(n)}(y)
\end{align*}
from the range
$$ \textstyle\frac{1}{p_{2,n}}<\frac{1}{2}, \quad \frac{1}{p_{1,1}},\ldots,\frac{1}{p_{1,n-1}}<\frac{1}{2n-2},
\quad \frac{1}{p}=\frac{1}{p_{2,n}}+\sum_{j=1}^{n-1}\frac{1}{p_{1,j}}>1-\frac{1}{2n-2} $$
to a range that includes
\begin{equation}\label{eqnewregion2}
\textstyle\frac{1}{p_{2,n}},\frac{1}{p_{1,1}},\ldots,\frac{1}{p_{1,n-1}}<\frac{1}{n},
\quad \frac{1}{p}=\frac{1}{p_{2,n}}+\sum_{j=1}^{n-1}\frac{1}{p_{1,j}}>1-\frac{1}{2n-2} .
\end{equation}
We do that by performing the Calder\'{o}n-Zygmund decomposition in each fiber $F_{1,j}(x,\cdot)$ for $j=1,\ldots,n-1$ respectively.
In $j$-th step that extends the range of weak $\mathrm{L}^p$ estimates by raising $p_{1,j}^{-1}$ all the way to $1$.
The intersection of the new range with (\ref{eqnewregion2}) lies inside the Banach simplex,
so real multilinear interpolation actually provides strong estimates there.
In $n-1$ steps we recover the whole region (\ref{eqnewregion2}).

\subsection{A counterexample}
\label{subseccounterex}
Finally, let us show that the degenerate case $m=1$ gives objects that need not be bounded
just by assuming all other conditions of Theorem \ref{theoremmain}.
Since $G$ can have no isolated vertices, there must be an edge between $x_1$ and each of the vertices $y_j$,
so $E=\{(1,1),\ldots,(1,n)\}$.

If $m=n=1$, there can be no open range of estimates, so suppose $m=1<n$.
The form can be written as
$$ \Lambda_{E}(F_{1,1},\ldots,F_{1,n}) = \int_{\mathbb{R}} \Big(\int_{\mathbb{R}^n} K(x_1,y_1,\ldots,y_n)
\prod_{j=1}^{n} F_{1,j}(x_1,y_j) \,dy_1\ldots dy_n\Big) dx_1 . $$
One can notice that the inner integral has the symmetry of a dyadic one-dimensional multilinear Calder\'{o}n-Zygmund form for each fixed $x_1$.
Still, we claim that the two-dimensional conditions (\ref{eqt1wbpcondition}) and (\ref{eqt1condition})
are not enough to have any $\mathrm{L}^p$ bounds for $\Lambda_E$.
Instead one should impose one-dimensional testing conditions in each fiber of $K$.

To present the counterexample we take the kernel
$$ K(x_1,y_1,\ldots,y_n) := \sum_{I\times J\in\mathcal{C}} \lambda_{I\times J}\, |I|^{1-n} \mathbf{h}_{I}(x_1)
\mathbf{1}_{J}(y_1) \ldots \mathbf{1}_{J}(y_n) , $$
where the coefficients $\lambda=(\lambda_{Q})_{Q\in\mathcal{C}}$ are given by
$$ \lambda_{I\times J} := \left\{\begin{array}{cl} 1 & \textrm{ if } I=[0,2^{-k}) \textrm{ and } J\subseteq[0,1)
\textrm{ for some } k\in\{0,1,\ldots,r-1\} , \\
0 & \textrm{ otherwise}  \end{array}\right.$$
and $r$ is a positive integer.
Using only $|\lambda_{I\times J}|\leq 1$ we get
$$ \big|K(x_1,y_1,\ldots,y_n)\big| \leq \sum_{\substack{I\times J\in\mathcal{C}\\ x_1\in I,\ y_1,\ldots,y_n\in J}}\!\!|J|^{1-n}
\lesssim \Big(\min_{\substack{J\in\mathcal{D}\\ y_1,\ldots,y_n\in J}}\!\!|J|\,\Big)^{1-n}
\leq \Big(\max_{1\leq j_1<j_2\leq n}|y_{j_1}\!-y_{j_2}|\Big)^{1-n} , $$
under conventions $\min\emptyset=\infty$ and $\frac{1}{\infty}=0$,
which confirms the size estimate (\ref{eqczkernelest}) with $m=1$.
It is also immediate to verify
$$ |\Lambda_{E}(\mathbf{1}_{Q},\ldots,\mathbf{1}_{Q})| = \bigg|\sum_{Q'\in\mathcal{C} : Q'\supsetneqq Q} \lambda_{Q'}
|Q'|^{(1-n)/2} |Q|^{(n+1)/2}\bigg| \leq \frac{|Q|(\frac{1}{2})^{n-1}}{1-{(\frac{1}{2})}^{n-1}} \leq |Q| $$
for each $Q\in\mathcal{C}$ and
$$ \|T_{1,j}(\mathbf{1}_{\mathbb{R}^2},\ldots,\mathbf{1}_{\mathbb{R}^2})\|_{\mathrm{BMO}(\mathbb{R}^2)}
= \Big\|\sum_{I\times J\in \mathcal{C}} \!\lambda_{I\times J}\,\mathbf{h}_{I}\!\otimes\!\mathbf{1}_{J}\Big\|_{\mathrm{BMO}(\mathbb{R}^2)}
= \|\lambda\|_{\mathrm{bmo}} = \Big(\sum_{k=0}^{r-1} 2^{k} \big(2^{-k}\big)^2\Big)^{1/2} \!\leq \sqrt{2} $$
for $1\leq j\leq n$.
Thus the conditions of Theorem \ref{theoremmain} are fulfilled.

On the other hand, for each $j$ we define
$$ F_{1,j}(x,y) := \left\{\begin{array}{cl} \big(\frac{2^\ell}{\ell(\ell+1)}\big)^{1/n}
& \textrm{for } x\in[2^{-\ell},2^{-\ell+1}),\ y\in[0,1),\ \ell\in\{1,2,\ldots,r\} , \\
0 & \textrm{otherwise} , \end{array}\right. $$
and substitute into (\ref{eqentangledform}) to obtain
\begin{align*}
\Lambda_{E}(F_{1,1},\ldots,F_{1,n})
& = \sum_{k=0}^{r-1} 2^k (2^{-k})^{1-n} (2^{-k})^n \Big(\sum_{\ell=k+2}^{r}\frac{1}{\ell(\ell+1)} - \frac{1}{(k+1)(k+2)}\Big) \\
& = \sum_{k=0}^{r-1}\frac{1}{k+2} - \frac{r-1}{r+1} - 1 + \frac{1}{r+2} \longrightarrow \infty \quad\textrm{as } r\to\infty .
\end{align*}
Since
$$ \|F_{1,j}\|_{\mathrm{L}^n} = \Big(\sum_{\ell=1}^{r} 2^{-\ell} \frac{2^{\ell}}{\ell(\ell+1)}\Big)^{1/n}
= \Big(1-\frac{1}{r+1}\Big)^{1/n} \leq 1 , $$
we see that Bound (\ref{eqt1estimate}) does not hold when $p_{1,1}\!=\!\ldots\!=\!p_{1,n}\!=\!n$.
Indeed, the estimate cannot hold for any choice of the exponents, because symmetry and interpolation would then
recover the above ``central point'' bound.

\section{Necessity of the testing conditions}
\label{sectionnecessity}

We begin this section by reducing Theorem \ref{theoremreformulated} to Part (a) of Theorem \ref{theoremmain}.
\begin{proof}[Proof of Theorem \ref{theoremreformulated}]
Assume that Conditions (\ref{eqt1restricted}) are satisfied.
Let $r>0$ be large enough so that $K$ is supported in $[-r,r]^{m+n}$.

Fix $(u,v)\in E$ and take a square $Q=I\times J\in\mathcal{C}$.
Let us temporarily denote
$$ \mathcal{C}(Q) := \big\{ Q'\in\mathcal{C} \,:\, |Q'|=|Q|, \  Q' \textrm{ intersects } [-r,r]\times[-r,r] \big\} , $$
which is obviously a finite collection.
We start by decomposing
\begin{align*}
& T_{u,v}\big((\mathbf{1}_{\mathbb{R}^2})_{(i,j)\in E\setminus\{(u,v)\}}\big)(x_u,y_v)\mathbf{1}_{Q}(x_u,y_v) \\
& = \sum_{Q_{i,j}\in\mathcal{C}(Q)\mathrm{\,for\,each\,}(i,j)\in E\setminus\{(u,v)\}}
T_{u,v}\big((\mathbf{1}_{Q_{i,j}})_{(i,j)\in E\setminus\{(u,v)\}}\big)(x_u,y_v)\mathbf{1}_{Q}(x_u,y_v) \\
& = \sum_{Q_{i,j}\in\mathcal{C}(Q)\mathrm{\,for\,each\,}(i,j)\in E\setminus\{(u,v)\}}
\int_{\mathbb{R}^{m+n-2}} K(x_1,\ldots,x_m,y_1,\ldots,y_n) \prod_{(i,j)\in E} \!\mathbf{1}_{Q_{i,j}}(x_i,y_j) \,\prod_{i\neq u}dx_i \,\prod_{j\neq v}dy_j ,
\end{align*}
where for convenience we write $Q_{u,v}:=Q$.
Due to the term $\prod_{(i,j)\in E} \!\mathbf{1}_{Q_{i,j}}(x_i,y_j)$ the above summand
can be nonzero only when it is of the form
\begin{equation}\label{eqconstantterms}
\int_{\mathbb{R}^{m+n-2}} K(x_1,\ldots,x_m,y_1,\ldots,y_n)
\,\prod_{i=1}^{m} \mathbf{1}_{I^{(i)}}(x_i) \prod_{j=1}^{n} \mathbf{1}_{J^{(j)}}(y_j) \,\prod_{i\neq u}dx_i \,\prod_{j\neq v}dy_j
\end{equation}
for some dyadic intervals $I^{(1)},\ldots,I^{(m)},J^{(1)},\ldots,J^{(n)}$ of the same length and such that $I^{(u)}\times J^{(v)}=Q$.
Recall that $K$ is constant on all dyadic cubes in $\mathbb{R}^{m+n}$ that are disjoint from the diagonal $D$.
Thus, the expression in (\ref{eqconstantterms}) is constant for all $(x_u,y_v)\in Q$, except possibly when
$I^{(i)}=I$ for each $i$ and $J^{(j)}=J$ for each $j$, in which case (\ref{eqconstantterms}) becomes simply
\begin{align*}
& \int_{\mathbb{R}^{m+n-2}} K(x_1,\ldots,x_m,y_1,\ldots,y_n) \prod_{(i,j)\in E} \!\mathbf{1}_{Q}(x_i,y_j) \,\prod_{i\neq u}dx_i \,\prod_{j\neq v}dy_j \\
& = T_{u,v}\big((\mathbf{1}_{Q})_{(i,j)\in E\setminus\{(u,v)\}}\big)(x_u,y_v)\mathbf{1}_{Q}(x_u,y_v) .
\end{align*}
This discussion leads us to
\begin{align}
& T_{u,v}(\mathbf{1}_{\mathbb{R}^2},\ldots,\mathbf{1}_{\mathbb{R}^2})(x,y)
- \frac{1}{|Q|}\int_{Q} T_{u,v}(\mathbf{1}_{\mathbb{R}^2},\ldots,\mathbf{1}_{\mathbb{R}^2}) \nonumber \\
& =\, T_{u,v}(\mathbf{1}_{Q},\ldots,\mathbf{1}_{Q})(x,y)
- \frac{1}{|Q|}\int_{Q} T_{u,v}(\mathbf{1}_{Q},\ldots,\mathbf{1}_{Q}) \quad\textrm{for every } (x,y)\in Q . \label{eqequalitymodconstant}
\end{align}
Combining this equality with Conditions (\ref{eqt1restricted}) we obtain
$$ \frac{1}{|Q|} \int_{Q} \Big| T_{u,v}(\mathbf{1}_{\mathbb{R}^2},\ldots,\mathbf{1}_{\mathbb{R}^2})
- \frac{1}{|Q|}\int_{Q} T_{u,v}(\mathbf{1}_{\mathbb{R}^2},\ldots,\mathbf{1}_{\mathbb{R}^2}) \Big|
\leq \frac{2}{|Q|}\int_{Q} \big|T_{u,v}(\mathbf{1}_{Q},\ldots,\mathbf{1}_{Q})\big| \lesssim 1 . $$
It is well-known (as an easy consequence of the John-Nirenberg inequality) that the quantity
$$ \sup_{Q\in\mathcal{C}} \frac{1}{|Q|} \int_{Q} \Big| F - \frac{1}{|Q|}\int_{Q}F \Big| $$
is comparable with $\|F\|_{\mathrm{BMO}(\mathbb{R}^2)}$, see \cite[Section 3]{AHMTT}.
That way we have established Condition (\ref{eqt1condition}).

Verification of Condition (\ref{eqt1wbpcondition}) using (\ref{eqt1restricted}) is easy,
as for some $(u,v)\in E$ and an arbitrary $Q\in\mathcal{C}$,
$$ |\Lambda_{E}(\mathbf{1}_{Q},\ldots,\mathbf{1}_{Q})|
= \Big|\int_{\mathbb{R}^2} T_{u,v}(\mathbf{1}_{Q},\ldots,\mathbf{1}_{Q})\,\mathbf{1}_{Q}\Big| \\
\leq \|T_{u,v}(\mathbf{1}_{Q},\ldots,\mathbf{1}_{Q})\|_{\mathrm{L}^{1}(Q)} \lesssim |Q| . $$
Therefore, Theorem \ref{theoremmain} (a) can be applied and this completes the proof.
\end{proof}

In order to prove Part (b) of Theorem \ref{theoremmain}, it is now enough to reduce its hypotheses to Conditions (\ref{eqt1restricted}),
which imply (\ref{eqt1wbpcondition}) and (\ref{eqt1condition}), as we have just shown.
\begin{proof}[Proof of Theorem \ref{theoremmain} (b)]
Suppose that Estimate (\ref{eqt1estimate}) holds with some choice of exponents $p_{i,j}$.
Fix $(u,v)\in E$, take $Q\in\mathcal{C}$, choose $F_{i,j}=\mathbf{1}_{Q}$ for $(i,j)\in E\setminus\{(u,v)\}$, and leave $F_{u,v}$ arbitrary.
Let $p'_{u,v}$ denote the conjugated exponent of $p_{u,v}$.
By (\ref{eqduality}) and (\ref{eqt1estimate}),
$$ \Big|\int_{\mathbb{R}^2} T_{u,v}(\mathbf{1}_{Q},\ldots,\mathbf{1}_{Q})\, F_{u,v}\Big|
\,\lesssim\, \|F_{u,v}\|_{\mathrm{L}^{p_{u,v}}} \!\!\prod_{(i,j)\neq (u,v)}\!\! \|\mathbf{1}_{Q}\|_{\mathrm{L}^{p_{i,j}}}
= \|F_{u,v}\|_{\mathrm{L}^{p_{u,v}}} |Q|^{1/p'_{u,v}} , $$
so duality implies
$$ \| T_{u,v}(\mathbf{1}_{Q},\ldots,\mathbf{1}_{Q}) \|_{\mathrm{L}^{p'_{u,v}}\!(Q)} \lesssim |Q|^{1/p'_{u,v}} . $$
Furthermore, Jensen's inequality gives
$$ \frac{1}{|Q|}\int_{Q} \big|T_{u,v}(\mathbf{1}_{Q},\ldots,\mathbf{1}_{Q})\big|
\leq \Big(\frac{1}{|Q|}\int_{Q} \big|T_{u,v}(\mathbf{1}_{Q},\ldots,\mathbf{1}_{Q})\big|^{p'_{u,v}}\Big)^{1/p'_{u,v}} \lesssim 1 , $$
which is exactly (\ref{eqt1restricted}).
\end{proof}

Let us conclude with a comment that the short proofs given in this section are adaptations of classical arguments from \cite{Ste}.
If the perfect cancellation of $K$ was replaced with the standard H\"{o}lder continuity condition,
the difference of the two sides in (\ref{eqequalitymodconstant}) would only be a bounded function, which
would still be enough to follow the reasoning of the corresponding part
of the proof.

\begin{bibdiv}
\begin{biblist}

\bib{AHMTT}{article}{
author={P. Auscher},
author={S. Hofmann},
author={C. Muscalu},
author={T. Tao},
author={C. Thiele},
title={Carleson measures, trees, extrapolation, and T(b) theorems},
journal={Publ. Mat.},
volume={46},
year={2002},
number={2},
pages={257--325},
note={},
eprint={}
}

\bib{BDNTTV}{article}{
author={A. B\'{e}nyi},
author={C. Demeter},
author={A. R. Nahmod},
author={C. Thiele},
author={R. H. Torres},
author={P. Villarroya},
title={Modulation invariant bilinear T(1) theorem},
journal={J. Anal. Math.},
volume={109},
year={2009},
number={},
pages={279--352},
note={},
eprint={}
}

\bib{Ber}{article}{
author={F. Bernicot},
title={Fiber-wise Calder\'{o}n-Zygmund decomposition and application to a bi-dimensional paraproduct},
journal={},
volume={},
year={},
number={},
pages={},
note={To appear in Illinois J. Math.},
eprint={}
}

\bib{DJ}{article}{
author={G. David},
author={J.-L. Journ\'{e}},
title={A boundedness criterion for generalized Calder\'{o}n-Zygmund operators},
journal={Ann. of Math. (2)},
volume={120},
year={1984},
number={2},
pages={371--397},
note={},
eprint={}
}

\bib{DT}{article}{
author={C. Demeter},
author={C. Thiele},
title={On the two-dimensional bilinear Hilbert transform},
journal={Amer. J. Math.},
volume={132},
year={2010},
number={1},
pages={201--256},
note={},
eprint={}
}

\bib{GT}{article}{
author={L. Grafakos},
author={R. H. Torres},
title={Multilinear Calder\'{o}n-Zygmund theory},
journal={Adv. Math.},
volume={165},
year={2002},
number={1},
pages={124--164},
note={},
eprint={}
}

\bib{Kov1}{article}{
author={V. Kova\v{c}},
title={Bellman function technique for multilinear estimates and an application to generalized paraproducts},
journal={Indiana Univ. Math. J.},
volume={60},
year={2011},
number={3},
pages={813--846},
note={},
eprint={}
}

\bib{Kov2}{article}{
author={V. Kova\v{c}},
title={Boundedness of the twisted paraproduct},
journal={Rev. Mat. Iberoam.},
volume={28},
year={2012},
number={4},
pages={1143--1164},
note={},
eprint={}
}

\bib{Ste}{book}{
author={E. M. Stein},
title={Harmonic analysis: real-variable methods, orthogonality, and oscillatory integrals},
series={Princeton Math. Ser.},
publisher={Princeton Univ. Press, Princeton, NJ},
volume={43},
year={1993}
}

\end{biblist}
\end{bibdiv}

\end{document}